\numberwithin{equation}{section}
\newtheorem{teo}{Theorem}
\newtheorem{prop}{Proposition}[section]
\newtheorem{lem}{Lemma}[section]
\newcommand{\Var}{\text{Var}}
\title{{\textsc{\Large Central limit theorems for a driven particle in a random medium with mass aggregation}}}
\author{\textsc{\large Luiz Renato Fontes}\thanks{Partially supported by CNPq grant 311257/2014-3, and FAPESP grant 2017/10555-0.  Instituto de Matemática e Estat\'istica, Universidade de São Paulo, Rua do Matão 1010, Cidade Universitária, 05508-090 São Paulo SP, Brasil. Email: lrfontes@usp.br }\and \textsc{\large Pablo Almeida Gomes}\thanks{Partially
		supported by CAPES.  Instituto de Ciências Exatas, Universidade Federal de Minas Gerais, Av. Antônio Carlos 6627, Pampulha, 31270-901  Belo Horizonte MG, Brasil. Email: pabloag@ufmg.br} \and \textsc{\large Remy Sanchis}\thanks{Partially
		supported by CAPES, CNPq and FAPEMIG (Programa Pesquisador Mineiro).  Instituto de Ciências Exatas, Universidade Federal de Minas Gerais, Av. Antônio Carlos 6627, Pampulha, 31270-901  Belo Horizonte MG, Brasil. Email: rsanchis@mat.ufmg.br}}
\date{}
\begin{document}
	
	\maketitle
	
	\vspace{-1cm}
	
	\begin{abstract}
		We establish central limit theorems for the position and velocity of the charged particle in the mechanical particle model introduced in~\cite{FNV}.
	\end{abstract}
	
	\vspace{.5cm}
	\noindent{\bf AMS 2010 Mathematics Subject Classification.} 60K35, 60J27
	\vspace{.5cm}
	
	\noindent{\bf Key words and phrases.} Mass aggregation, Markovian approximation, Central limit theorem
	
	\vspace{.5cm}
	
	


	\section{Introduction}
	
	We revisit the $1d$ mechanical particle model introduced in~\cite{FNV}, where we have a charged particle initially standing at the origin, subjected to an electric field, in an environment of initially standing neutral particles of unit mass. 
	Each neutral particle has randomly either an elastic nature or an inelastic nature. With the first kind of neutral particle, the charged particle collides in a totally elastic fashion. And the collisions of the charged particle with the second kind of neutral particle is totally inelastic. The neutral particles do not interact amongst themselves.
	Both kinds of neutral particles are initially randomly placed in space. 
	
	In~\cite{FNV}, a law of large numbers was proved for the instantaneous velocity of the charged particle. In this article, we derive central limit theorems for both the position and the instantaneous velocity of that particle, 
	in a sense completing the result of~\cite{FNV}; see Final Remarks of~\cite{FNV}.
	
	Our approch is similar to that of~\cite{FNV}, namely, we first prove CLT's for the corresponding objects of a modified process, where there are no recollisions. The results for the original process are established by showing that the differences between the actual and modified quantities are negligible in the relevant scales.

	
	\section{The Model and Results} 
	
	We consider a system of infinitely many point like particles in the non-negative real semi-axis $[0,\infty)$. At time $0$ the system is static, every particle has velocity 0. There is a distinguished particle of mass $2$ initially at the origin; we will call it the {\em tracer particle (t.p.)} (referred to before as the charged particle). 
	The remaining particles  (referred to before as neutral particles) have mass $1$.\footnote{The distinction of the initial mass of the t.p.~with respect to the other particles, absent in~\cite{FNV}, is for convenience only; any positive initial mass for the t.p.~would not change our results, but values 1 or below would require unimportant complications in our arguments.}
	Let $\{\xi_i\}_{i \in \mathbb{N}}$ denote a family of i.i.d.~positive random variables, with an absolutely continuous distribution, and finite mean $\mathbb{E}\xi_1 = \mu < \infty$, representing the initial interparticle distances. In this way, $S_i = \xi_1 + \cdots + \xi_i$ denotes the position of the $i$-th particle initially in front of the t.p.~at time 0. Moreover, given a parameter $p \in (0,1]$, and a family $\{\eta_i\}_{i \in \mathbb{N}}$ of i.i.d.~Bernoulli random variables with success probability $p$, 
	we say that the $i$-th particle is {\it sticky} if $\eta_i = 1$ and is {\it elastic} if $\eta_i = 0$.
	We assume $\{\xi_i\}_{i \in \mathbb{N}}$ and $\{\eta_i\}_{i \in \mathbb{N}}$ to be independent of one another.
	
	A constant positive force $F$ is turned on at time 0, and kept on. It acts solely on the tracer particle, producing in it an accelerated motion to the right. Collisions will thus take place in the system; we assume they occur only when involving the t.p., and suppose that all other particles do not interact among themselves. If at an instant $t>0$, the t.p.~collides with a sticky particle, then this is a perfectly inelastic collision, meaning that, upon collision,
	momentum is conserved and the energy of the two particle system is minimum, 
	which in turn means that the t.p.~incorporates the sticky particle, along with its mass, and the new velocity of the t.p.~becomes
	(immediately after time $t$)
	\begin{equation}\label{eq:1}
	V(t^+) = \frac{M_t}{M_t + 1}V_{t},
	\end{equation}
	where $V_t$ and $M_t$ are respectively the velocity and mass of t.p.~at time $t$. However, if the t.p.~collides with an elastic particle which is moving at velocity $v$ at the time of the collision, say $t$, then we have a perfectly elastic collision, where energy and momentum are preserved, and in this case, immediately after time $t$, the t.p.~and the elastic particle velocities become, respectively,
	\begin{eqnarray}\label{eq:2}
	V(t^+) &=& \frac{M_t - 1}{M_t + 1}V_t + \frac{2}{M_t + 1}v ~~~ \mbox{and} \nonumber\\\label{eqd}
	v' &=& \frac{2M_t}{M_t + 1}V_t - \frac{M_t - 1}{M_t + 1}v,
	\end{eqnarray}
	where $V_t$ and $M_t$ are as above.
	
	For $t \geq 0$, let $V_t$ and $Q_t$  denote the velocity and position of the t.p.~at time $t$, respectively. 
	As argued in~\cite{FNV}, the
	stochastic process $(V_t,Q_t)_{t \geq 0}$ is well defined --- see the discussion at the end of Section 2 of~\cite{FNV}; in particular there a.s.~are no multiple collisions or infinitely many recollisions in finite time intervals ---, and is determined by $\{\xi_i, \eta_i ~;~ i \in \mathbb{N}\}$. Therefore we consider the product sample space $\Omega = \left\{(0,\infty) \times \left\{0,1\right\} \right\}^{\mathbb{N}}$, and the usual product Borel $\sigma$-algebra, and the product probability measure 
	$\mathbb{P} := \prod_{i \geq 1} [\mathbb{P}_{\xi_i} \otimes \mathbb{P}_{\eta_i} ]$, where for $i \geq 1$, $\mathbb{P}_{\xi_i}$ and $\mathbb{P}_{\eta_i}$ denote the probability measures of $\xi_i$ and $\eta_i$. We will make repeatedly make use of the notation
	$$\bar\xi_i=\xi_i-\mu,\,\bar\eta_i=\eta_i-p.$$

%
%

	\medskip

	From [1], we know that $\mathbb{P}$-almost surely, the  velocity of the t.p.~converges to a(n explicit) limit. More precisely, 
	we have the following result.
	\begin{teo}\label{Th: 1}
		The stochastic process $(V_t , Q_t)_{t \geq 0}$ is such that
		\[ \lim_{t \to \infty} V_t = \sqrt{\frac{F\mu}{2-p} } ~~~ \mathbb{P}-\mbox{a.s.}\]
	\end{teo}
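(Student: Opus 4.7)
The plan is to follow the strategy anticipated in the introduction: first establish the limit for a modified process in which every elastic particle is removed from the medium immediately after being struck (so that no recollisions can occur), and then argue that the discrepancy with the original process is negligible on the relevant scale. The modified process is convenient because, observed at its collision times, it reduces to an autonomous recursion driven by the i.i.d.~inputs $\{(\xi_n,\eta_n)\}_{n\ge 1}$.

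Let $T_n$ denote the time of the $n$-th collision in the modified process, set $V_n=V(T_n^+)$, and let $M_n=2+\sum_{i=1}^{n}\eta_i$ be the post-collision mass, so that $M_n/n\to p$ a.s.~by the strong law of large numbers. Between the $n$-th and $(n{+}1)$-st collisions the t.p.~undergoes uniformly accelerated motion with acceleration $F/M_n$ over the distance $\xi_{n+1}$, which yields the kinematic identity $V(T_{n+1}^-)^2=V_n^2+2F\xi_{n+1}/M_n$. Plugging this into \eqref{eq:1} and \eqref{eqd} gives, for $W_n:=V_n^2$, the recursion
\[
W_{n+1} \;=\; \Bigl(1-\frac{c(\eta_{n+1},M_n)}{M_n}\Bigr)\Bigl(W_n+\frac{2F\xi_{n+1}}{M_n}\Bigr),
\]
where $c(1,M)=M(2M+1)/(M+1)^2\to 2$ and $c(0,M)=4M^2/(M+1)^2\to 4$, so the contractive coefficient has conditional mean $\mathbb{E}[c(\eta_{n+1},M_n)\mid M_n]\to 4-2p$. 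Setting $\tilde W_n:=W_n-F\mu/(2-p)$ and using $M_n\sim pn$, the recursion rewrites as
\[
\tilde W_{n+1} \;=\; \Bigl(1-\frac{4-2p}{pn}+o\bigl(\tfrac{1}{n}\bigr)\Bigr)\tilde W_n + \mathcal{E}_{n+1},
\]
where $\mathcal{E}_{n+1}$ is a martingale increment of size $O(1/n)$ in $L^2$ coming from the fluctuations of $\eta_{n+1}$ and $\xi_{n+1}$. A Robbins--Siegmund / stochastic-approximation argument (whose conclusion can be read off the explicit product $\prod_{k\le n}(1-\alpha_k)\sim n^{-(4-2p)/p}$) then gives $\tilde W_n\to 0$ a.s., provided a priori tightness of $(W_n)$ is available; the latter follows either from a short moment computation on the same recursion, or from the energy inequality $\tfrac12 M_tV_t^2\le FQ_t$.

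The continuous-time statement for the modified process is then immediate, since $V_t$ varies monotonically between consecutive collisions by at most $\sqrt{W_n+2F\xi_{n+1}/M_n}-\sqrt{W_n}=O(\xi_{n+1}/(nV_n))=o(1)$ a.s., so $V_t\to\sqrt{F\mu/(2-p)}$ uniformly on $[T_n,T_{n+1}]$. Finally, to pass from the modified to the original process one controls the recollision contribution: an elastic particle ejected at time $s$ moves ballistically at velocity $\approx 2V_s$ while the t.p.~proceeds at velocity $\approx V_s$ with only the tiny acceleration $F/M_s$, so a recollision requires a time lapse of order $M_sV_s/F$, which is vastly longer than the mean spacing between primary collisions; the density of recollisions is therefore of lower order and their cumulative slow-down of $V_t$ is negligible in the LLN scaling. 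The main obstacle is the rigorous treatment of the recursion for $\tilde W_n$, where the contractive rate $(4-2p)/(pn)$ sits at the critical $1/n$ scale and must be carefully weighed against both the martingale noise and the $o(1/n)$ drift error; everything else amounts to bookkeeping.
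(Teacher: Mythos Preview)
The paper does not actually prove this theorem: it is quoted from~\cite{FNV}, and the present article only uses it as an input. So there is no ``paper's own proof'' to compare against here beyond what can be inferred from the machinery developed for the CLTs.

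That said, your overall two-step strategy (LLN for the modified process, then control of recollisions) is precisely the one used in~\cite{FNV} and reused throughout this paper. Where you diverge is in the analysis of the modified process. Instead of your stochastic-approximation viewpoint on the recursion for $\tilde W_n$, the route taken in~\cite{FNV} (and visible here in~\eqref{eq: DefV} and Lemma~\ref{Le: 1}) is to \emph{solve} the recursion explicitly as
\[
\bar V_i^2 \;=\; 2F\sum_{j=1}^{i}\xi_j\,X_{i,j},
\qquad
X_{i,j}=\frac{1}{M_j}\prod_{k=j}^{i}\Bigl(\frac{M_k+\eta_k-1}{M_k+1}\Bigr)^{2},
\]
and then prove directly that $X_{i,j}\sim j^{\zeta-1}/(p\,i^{\zeta})$ with $\zeta=2(2-p)/p$. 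A Riemann-sum computation then gives $\bar V_i^2\to \frac{2F\mu}{p}\int_0^1 x^{\zeta-1}\,dx=F\mu/(2-p)$. This product-formula approach sidesteps exactly the difficulty you flag as the ``main obstacle'': there is no need to balance a critical $1/n$ contraction against martingale noise, because the cumulative product is estimated in one shot. Your Robbins--Siegmund sketch could be made to work, but as written it is only a plan; you do not verify the hypotheses, and the a~priori tightness is asserted rather than shown.

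Your recollision paragraph is the weaker half. The heuristic ``a recollision requires a time lapse of order $M_sV_s/F$'' does not by itself bound the \emph{effect} of recollisions on $V_t$; a single elastic particle can recollide infinitely often. The argument actually used (see Lemma~\ref{Le: finite} here, and the discussion on p.~803 of~\cite{FNV}) is different: one first shows that only finitely many elastic particles ever recollide (those hit after some $T_0$ are boosted to speed $\approx 2V_L$ and are never caught again), and then, for each such particle, the telescoping bound $\sum_i (V(u_i)-v(u_i))\le \sum_i (v(u_{i+1})-v(u_i))\le V_L$ controls the total velocity loss. Your sketch does not contain this step, and without it the passage from the modified to the original process is a genuine gap.
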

	
	From now on we denote the limit velocity 
	$\sqrt{F\mu/(2-p)}$ by $V_L$. The purpose of this paper is to show that the velocity $V_t$ and position $Q_t$ of the tracer particle satisfy central limit theorems. Our main results are as follows (where "$\Longrightarrow$" denotes convergence in distribution).
	\begin{teo}\label{CLTQ}
		Let $\Var(\xi_1) = \sigma^2 < \infty.$ Then, as $t\to\infty$, 
		\[ \frac{Q_t - tV_L}{\sqrt{t}} \Longrightarrow \mathcal{N}(0,\sigma_q^2),
		\]
		where $\sigma_q>0$.
	\end{teo}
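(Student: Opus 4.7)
The plan is to follow the two-step strategy of~\cite{FNV}: first establish the CLT in the modified (Markovian) process, where each elastic particle, once bounced off the tracer, is declared inert and does not recollide with it, and then transfer to the original process by adapting the recollision estimates of~\cite{FNV} to show that the two tracer positions differ by $o_{\mathbb{P}}(\sqrt t)$. Below I sketch the main steps for the modified process.

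In the modified process it is convenient to pass to the embedded discrete clock indexed by collision number. Let $T_n$ be the time at which the tracer reaches the $n$-th initial particle, $U_n = V_{T_n^+}$, $M_n = 2 + \sum_{i\le n}\eta_i$, and $K_t = \max\{n : T_n \le t\}$. The ballistic motion between collisions gives, via the work--energy theorem, $V_{T_n^-}^2 = U_{n-1}^2 + 2F\xi_n/M_{n-1}$ and $T_n - T_{n-1} = M_{n-1}(V_{T_n^-} - U_{n-1})/F$, and the collision rule gives $U_n = c(M_{n-1},\eta_n)V_{T_n^-}$ with $c = M_{n-1}/(M_{n-1}+1)$ if $\eta_n = 1$ and $c = (M_{n-1}-1)/(M_{n-1}+1)$ if $\eta_n = 0$. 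This defines a Markov recursion for $(U_n,M_n)$ whose LLN is Theorem~\ref{Th: 1}, with $V_L = \sqrt{F\mu/(2-p)}$ as the fixed point of the velocity component.

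The key identity comes from telescoping $S_n - V_L T_n = \sum_{k=1}^n(\xi_k - V_L(T_k - T_{k-1}))$ together with the expansion $T_k - T_{k-1} = \xi_k/U_{k-1} + O(1/M_{k-1})$, which yields
\[ S_n - V_L T_n = \frac{1}{V_L}\sum_{k=1}^n \xi_k(U_{k-1}-V_L) + o_{\mathbb{P}}(\sqrt n). \]
Setting $n = K_t$ and absorbing the two $O_{\mathbb{P}}(1)$ boundary pieces $Q_t - S_{K_t}$ and $V_L(t - T_{K_t})$ into the error gives
\[ Q_t - tV_L = \frac{1}{V_L}\sum_{k=1}^{K_t}\xi_k(U_{k-1}-V_L) + o_{\mathbb{P}}(\sqrt t). \]
Since $\bar\xi_k$ is independent of $(U_{k-1}, M_{k-1})$ and $U_{k-1}-V_L = O_{\mathbb{P}}(k^{-1/2})$, the martingale $\sum_{k=1}^{K_t}\bar\xi_k(U_{k-1}-V_L)$ has variance $O(\log t)$, hence is $o_{\mathbb{P}}(\sqrt t)$, and one obtains
\[ Q_t - tV_L = \frac{\mu}{V_L}\sum_{k=1}^{K_t}(U_{k-1}-V_L) + o_{\mathbb{P}}(\sqrt t). \]
An Anscombe-type argument (using $K_t/t \to V_L/\mu$ a.s.) then reduces the CLT for $Q_t - tV_L$ to a CLT for the partial sums $\sum_{k\le n}(U_{k-1}-V_L)$ of the velocity deviations, at deterministic time $n$.

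The CLT for these partial sums is the central analytic step. Linearising around the fixed point, $D_n := U_n - V_L$ satisfies a recursion $D_n = (1 - \alpha_n/M_{n-1})D_{n-1} + \varepsilon_n$ with $\alpha_n\in\{1,2\}$ of mean $2-p$, $M_{n-1}\sim pn$, and martingale-difference noise $\varepsilon_n$ of size $O(1/n)$ built from $\bar\xi_n$ and $\bar\eta_n$. This is a discrete stochastic approximation with relaxation rate $\sim 1/n$; I would apply a Kipnis--Varadhan / Poisson-equation martingale decomposition to write $\sum_{k\le n}D_k$ as a martingale plus $o_{\mathbb{P}}(\sqrt n)$ and conclude by the martingale CLT. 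The main obstacle is precisely this partial-sum CLT: naive considerations based on $\mathrm{Var}(D_n)\sim 1/n$ would predict only logarithmic fluctuations, but the long-range temporal correlations of the chain (relaxation time growing linearly with $n$) boost $\mathrm{Var}(\sum_{k\le n}D_k)$ up to order $n$, and identifying the limit variance together with checking $\sigma_q^2 > 0$ rests on a careful analysis of the Poisson equation for the linearised chain and sharp control of the nonlinear remainders; positivity of $\sigma_q^2$ will follow from the nondegeneracy of $\varepsilon_n$, since both $\bar\xi_n$ and $\bar\eta_n$ have positive variance.
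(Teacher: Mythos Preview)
Your overall route --- reduce to the modified process, pass to the embedded collision chain, expand $S_n-V_LT_n$ to isolate $\sum_k\xi_k(U_{k-1}-V_L)$, drop the $\bar\xi_k$-piece as lower order, then transfer back via the recollision estimate of \cite{FNV} --- coincides with the paper's. The reductions you sketch are equations~(3.8)--(3.13) and Lemmas~3.2, 3.5, 3.6 there; the transfer is Section~4.1 together with Lemma~4.1.

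The gap is at what you yourself flag as the main obstacle, the CLT for $\sum_{k\le n}D_k$. Kipnis--Varadhan and the Poisson-equation method are designed for \emph{stationary} (time-homogeneous) Markov chains; here $(U_n,M_n)$ is genuinely inhomogeneous --- $M_n\to\infty$, the contraction rate is $\sim\zeta/n$, and there is no invariant measure --- so there is no fixed Poisson equation to solve, and writing down a time-dependent one amounts to solving the linear recursion explicitly. That is effectively what the paper does, though starting from a different place: instead of linearising, it uses the closed-form iterated product $\bar V_i^{\,2}=2F\sum_{j\le i}\xi_jX_{i,j}$ with $X_{i,j}=M_j^{-1}\prod_{k=j}^i\bigl(\tfrac{M_k+\eta_k-1}{M_k+1}\bigr)^{2}$, proves the sharp asymptotic $X_{i,j}\sim j^{\zeta-1}/(pi^{\zeta})$ (Lemma~3.1), and decomposes $\tfrac{\mu}{\sqrt n}\sum_i(\bar V_{i-1}^{\,2}-V_L^2)$ into two independent Lindeberg--Feller triangular arrays $W_{3,n}=\tfrac{2F\mu}{p\sqrt n}\sum_j a_{j,n}\bar\xi_j$ and $Z_{4,n}$ (the analogous array in $\bar\eta_k$), plus half a dozen remainder terms ($V_{1,n},V_{2,n},W_{4,n},Z_{2,n},Z_{5,n},\tilde Z_{3,n}$) shown to be $o_{\mathbb P}(1)$ via LIL-type bounds on $M_j$ and Hoeffding/L\'evy-maximal inequalities on partial sums of $\bar\eta_k/k$. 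If you iterate your linearised recursion $E_n\approx(1-\zeta/n)E_{n-1}+c(F\bar\xi_n+\bar\eta_nV_L^2)/n$ and swap the order of summation in $\sum_{k\le n}E_k$, you recover exactly $W_{3,n}+Z_{4,n}$; so the right fix is not Kipnis--Varadhan but simply the explicit solution of the triangular recursion followed by Lindeberg--Feller, and the real work is controlling the nonlinear remainders --- which is where the paper spends most of its effort. Note also that the a priori rate $D_k=O_{\mathbb P}(k^{-1/2})$ you invoke to discard $\sum\bar\xi_kD_{k-1}$ is itself Lemma~3.2 and is proved by the same explicit analysis, so it is not available for free ahead of the main computation.
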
   	
	
	\begin{teo}\label{CLTV}
		Let $\Var(\xi_1) = \sigma^2 < \infty.$ Then, as $t\to\infty$,  
		\[ \sqrt{t} (V_t - V_L) \Longrightarrow \mathcal{N}(0,\sigma_v^2),
		\]
		where $\sigma_v>0$.
	\end{teo}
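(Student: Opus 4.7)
Following the strategy outlined in the Introduction, the plan is first to prove the CLT for the velocity in a modified process (in which elastic particles, once hit, never strike the t.p.\ again), and then to show that the discrepancy between the original and modified velocities is $o_\p(1/\sqrt t)$. Let $V_n^\pm$ denote the velocity of the t.p.\ just before/after its $n$-th collision in the modified process, and $M_n=2+\sum_{i=1}^n\eta_i$ its mass after the $n$-th collision. Energy conservation between consecutive collisions, combined with \eqref{eq:1} and \eqref{eqd} (applied with $v=0$, since every particle struck by the t.p.\ is at rest in the modified process), yields
\[
(V_{n+1}^-)^2 \;=\; c_n\,(V_n^-)^2 \;+\; \frac{2F\xi_{n+1}}{M_n},\qquad c_n:=\left(\frac{M_{n-1}-1+\eta_n}{M_{n-1}+1}\right)^2.
\]

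Set $W_n:=(V_n^-)^2-V_L^2$. Since $V_L^2=F\mu/(2-p)$ and $M_n/n\to p$ a.s., a Taylor expansion of the above recursion around $V_L^2$ gives
\[
W_{n+1}\;=\;\bigl(1-\tfrac{\alpha}{n}+o(\tfrac{1}{n})\bigr)\,W_n\;+\;\tfrac{1}{n}Z_n,\qquad\alpha\;=\;\tfrac{2(2-p)}{p}\;>\;\tfrac{1}{2},
\]
where $Z_n=\tfrac{2}{p}\bigl(V_L^2\bar\eta_n+F\bar\xi_{n+1}\bigr)+o(1)$ is a martingale difference with respect to the natural filtration, with finite limiting variance $\sigma_Z^2>0$. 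Iterating yields $W_n\approx\sum_{k<n}\tfrac{1}{k}\prod_{j>k}(1-\alpha/j)\,Z_k$; since $\prod_{j>k}(1-\alpha/j)\sim(k/n)^\alpha$ and $\alpha>1/2$, the variance of $W_n$ behaves like $\sigma_Z^2/((2\alpha-1)n)$. The martingale CLT then delivers $\sqrt n\,W_n\Longrightarrow\mathcal N(0,\sigma_Z^2/(2\alpha-1))$, and using $W_n=2V_L(V_n^--V_L)(1+o(1))$ one obtains $\sqrt n(V_n^--V_L)\Longrightarrow\mathcal N(0,\tilde\sigma^2)$ with $\tilde\sigma^2=\sigma_Z^2/(4V_L^2(2\alpha-1))>0$.

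To pass from collision index to continuous time, let $N_t$ denote the number of collisions of the modified process by time $t$; Theorem~\ref{Th: 1} and the SLLN for $\{\xi_i\}$ give $N_t/t\to V_L/\mu$ a.s.\ and $N_t-V_L t/\mu=O_\p(\sqrt t)$. Because inter-collision times are $O(1)$ and $|V_n^+-V_n^-|=O(V_L/M_n)$, one has $|V_t^{mod}-V_{N_t}^-|=o(1/\sqrt t)$. An Anscombe-type argument (justified by the $1/\sqrt n$ scale of fluctuations of $V_n^-$ together with the mean-reversion of the recursion) allows replacing $N_t$ by $\lfloor V_L t/\mu\rfloor$ without altering the limit law, producing
\[
\sqrt t\,(V_t^{mod}-V_L)\;\Longrightarrow\;\mathcal N(0,\sigma_v^2),\qquad \sigma_v^2\;=\;\tilde\sigma^2\,\mu/V_L\;>\;0.
\]

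The main technical obstacle is the comparison with the original process, $|V_t-V_t^{mod}|=o_\p(1/\sqrt t)$. Here the key observation is that the elastic particle expelled at the $n$-th collision carries velocity $u_n=\tfrac{2M_{n-1}}{M_{n-1}+1}V_n^-\to 2V_L$ a.s., strictly larger than $V_L$, so only finitely many knocked-forward particles are ever re-overtaken by the t.p.; each such recollision perturbs the t.p.'s velocity by $O(1/M_n)$. Tracking how these finitely many, $O(1/n)$-sized perturbations propagate through (and are damped by) the contractive recursion for $W_n$ --- a refinement to the $\sqrt t$ scale of the comparison strategy already developed in~\cite{FNV} for the LLN --- yields $|V_t-V_t^{mod}|=O_\p(1/t)$, completing the proof.
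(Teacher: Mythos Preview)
Your overall plan matches the paper's (CLT for the modified process, then comparison), but your route to the modified-process CLT is genuinely different. The paper solves the recursion for $\bar V_i^2$ explicitly via the product formula~(3.3) and then decomposes $\bar V_n^2-V_L^2$ into many pieces, eventually isolating two independent triangular arrays (one in the $\bar\xi_j$'s, one in the $\bar\eta_k$'s) to which Lindeberg--Feller applies; you instead linearize the one-step recursion about its fixed point and invoke the martingale CLT for stochastic-approximation schemes. Your route is shorter and more conceptual, but be aware that the ``$o(1/n)$'' multiplying $W_n$ and the ``$o(1)$'' inside $Z_n$ hide random, \emph{non}-martingale corrections (from $M_{n-1}-pn$ and from the cross-term $\bar\eta_n W_n/n$) that are not little-$o$'s in any uniform sense and must be controlled separately before the martingale CLT can be applied.

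The substantive gap is in the comparison step. You correctly argue that only finitely many elastic \emph{particles} are ever re-overtaken, but you then slide into ``finitely many, $O(1/n)$-sized perturbations'', i.e.\ finitely many \emph{recollisions}. This is not justified: a single early elastic particle may in principle recollide infinitely often, its velocity increasing at each hit but never exceeding $\sup_{s>t}V(s)$. The paper confronts exactly this in Lemma~4.1: even allowing infinitely many recollisions per particle, the total velocity deficit $\sum_j\delta(j)=\sum_l\bigl(V(s_l)-v_l\bigr)$ is a.s.\ finite, by a telescoping argument that bounds each recolliding particle's contribution by $V_L$. This summability, together with the explicit damping $\prod_{k=j}^{n}\bigl(\tfrac{M_k+\eta_k-1}{M_k+1}\bigr)^2=O\bigl((j/n)^\zeta\bigr)$, then gives $\bar V_n^2-V_n^2=O\bigl(\sum_{j\le n}\delta(j)\,j^{\zeta-1}/n^\zeta\bigr)=o(1/\sqrt n)$. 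Your damping heuristic can be repaired once ``finitely many perturbations'' is replaced by $\sum_j\delta(j)<\infty$, but that lemma is the heart of the comparison and is absent from your sketch.
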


	
	\section{Central Limit Theorems in a Modified Process}  
	
	As mentioned in the Introduction, we first prove central limit theorem analogues of Theorems \ref{CLTQ} and \ref{CLTV} for a modified process in which, when an elastic particle collides with the t.p., the elastic particle is annihilated and disappears from the system, and the velocity of the t.p.~changes according to the formula \eqref{eq:2}, while collisions between the t.p.~and sticky particles remain as in the original model. We denote the modified stochastic process by 
	$(\bar{V}(t), \bar{Q}(t))_{t \geq 0}$,  where $\bar{V}(t)$ and $\bar{Q}(t)$ are respectively the velocity and position of the t.p.~in the modified system at time $t$. 
	
	In the modified model, for $i \geq 1$, the t.p.~collides with the $i$-th particle only in the initial position of the latter particle, given by $S_i$; let us denote the instant when that collision occurs by $\bar{t}_i$, i.e., $\bar{Q}(\bar{t}_i) = S_i$. In this way, we can compute the $i-$th collision incoming and outgoing velocities $\bar{V}(\bar{t}_i)$ and $\bar{V}(\bar{t}_i^+)$, respectively, as follows. First note that, according the formulas \eqref{eq:1} and \eqref{eq:2}, we have the following relations
	\begin{eqnarray*}
		\mbox{(a)} ~~~ \bar{V}^2(\bar{t}_i^{\phantom{+}}) &=& \bar{V}^2(\bar{t}_{i-1}^+) + \frac{2F\xi_i}{M(\bar{t}_i)}; \nonumber \\
		\mbox{(b)} ~~~ \bar{V}^2(\bar{t}_i^+) &=&  \bar{V}^2(\bar{t}_i) \left[ \frac{M(\bar{t}_i) + (\eta_i - 1)}{M(\bar{t}_i) + 1}\right]^2,
	\end{eqnarray*}
	where $M(\bar{t}_i) = 2+ \sum_{l=1}^{i-1} \eta_l$.
	
	Iterating this relations, we get for $i = 1, 2, \ldots$, that
	\begin{equation}\label{eq: DefV}
	\bar{V}^2(\bar{t}_i^+) = \sum_{j = 1}^i \left[ \frac{2F\xi_j}{M(\bar{t}_j)}\prod_{k=j}^i\left( \frac{M(\bar{t}_k) + (\eta_k - 1)}{M(\bar{t}_k) + 1}\right)^2 \right].
	\end{equation} 
	
	In [1], it is proved that, almost surely, 
	\begin{equation*}
	\lim_{t \to \infty} \bar{V}(t) = V_L.
	\end{equation*}
	
	Let us at this point set some notation. 
	Given two random sequences  $\{X_n\}_{n \in \mathbb{N}}$ and $\{Y_n\}_{n \in \mathbb{N}}$, we write $X_n = O(Y_n)$ if there  almost surely exists $C > 0$, which may be a (proper) random variabe, but does not depend on $n$, such that $|X_n| \leq CY_n$ for every $n \in \mathbb{N}$. And we say $X_n = o(Y_n)$ if $X_n/Y_n$ almost surely converges to $0$ as $n \to\infty$.
	For simplicity, along the rest of the paper we denote $M(\bar{t}_i)$  by $M_i$. Notice that $M_1 = 2$ and 
	$M_i = 2 + \sum_{k=1}^{i-1} \eta_k$, $i\geq2$.
	
	To obtain the central limit theorems for the modified process, we start with an estimate for the random term 
	\begin{equation}\label{eq: X}
	X_{i,j} := \frac{1}{M_j}\prod_{k=j}^i\left( \frac{M_k + (\eta_k - 1)}{M_k + 1}\right)^2, ~~~  1 \leq j \leq i ~ \text{and} ~ i \in \mathbb{N}.
	\end{equation}
	Given $\varepsilon > 0$, for each $m \in \mathbb{N}$ we define the event 
	\begin{equation}\label{eq: A}
	A_{m,\varepsilon} = \left\{ X_{i,j} \in \left((1-\varepsilon)\frac{j^{\zeta - 1}}{p i^{\zeta}}, (1+\varepsilon)\frac{j^{\zeta - 1}}{p i^{\zeta}}  \right), ~~~ \forall (i,j) ~ \text{such that} ~ m \leq j \leq i \right\},
	\end{equation}
	where $\zeta := 2(2-p)/p$. 
	\begin{lem}\label{Le: 1}
		Let $X_{i,j}$ be as in \eqref{eq: X}, and $A_{m, \varepsilon}$ as in \eqref{eq: A}, where $\varepsilon > 0$ is otherwise arbitrary. Then we have that 
		 \[\lim_{m \to \infty} \mathbb{P}\left(A_{m,\varepsilon}\right) = 1. \]
	\end{lem}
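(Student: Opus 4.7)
The plan is to show that $X_{i,j}\cdot\frac{p\, i^\zeta}{j^{\zeta-1}}\to 1$ as $j\to\infty$, uniformly over $i\geq j$, almost surely. This is equivalent to the lemma, since the events $A_{m,\varepsilon}$ are increasing in $m$, and the above uniform convergence implies $\mathbb{P}(\bigcup_m A_{m,\varepsilon})=1$.

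I would begin by taking logarithms and using the identity $\frac{M_k+\eta_k-1}{M_k+1}=1-\frac{2-\eta_k}{M_k+1}$, so
\[
\log X_{i,j}=-\log M_j+2\sum_{k=j}^{i}\log\Bigl(1-\frac{2-\eta_k}{M_k+1}\Bigr).
\]
Since $M_k\geq 2$, the argument of $\log$ stays bounded away from $0$, so a Taylor expansion $\log(1-x)=-x+O(x^2)$ is valid with a remainder $R_{i,j}$ controlled by $\sum_{k=j}^\infty 1/M_k^2$. By the SLLN, $M_k\sim pk$ a.s., hence $\sum_{k\geq j}M_k^{-2}=O(1/j)$ uniformly in $i$.

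The next step is to decompose the main linear sum using $2-\eta_k=(2-p)-\bar\eta_k$:
\[
\sum_{k=j}^{i}\frac{2-\eta_k}{M_k+1}=(2-p)\sum_{k=j}^{i}\frac{1}{M_k+1}-\sum_{k=j}^{i}\frac{\bar\eta_k}{M_k+1}.
\]
For the deterministic piece, I would exploit the LIL-type bound $M_k=pk+O(\sqrt{k\log\log k})$ a.s.\ to write $\tfrac{1}{M_k+1}=\tfrac{1}{pk}+O(k^{-3/2+\delta})$. Summing gives
\[
\sum_{k=j}^{i}\frac{1}{M_k+1}=\frac{1}{p}\log(i/j)+\alpha_{i,j},\qquad \sup_{i\geq j}|\alpha_{i,j}|\to 0\text{ a.s.},
\]
using the convergence of $\sum k^{-3/2+\delta}$ together with $\sum_{k=j}^i k^{-1}=\log(i/j)+O(1/j)$. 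For the fluctuation piece, notice that $M_k$ is $\sigma(\eta_1,\ldots,\eta_{k-1})$-measurable while $\bar\eta_k$ is independent of that $\sigma$-algebra. Thus $N_n:=\sum_{k=1}^n\bar\eta_k/(M_k+1)$ is a martingale whose conditional variances $\sum_k p(1-p)/(M_k+1)^2$ are a.s.\ finite (bounded by $\sum_k C/k^2$ via the SLLN). By the martingale convergence theorem, $N_n$ converges a.s., hence $\sup_{i\geq j}|N_i-N_{j-1}|\to 0$ a.s.\ as $j\to\infty$.

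Combining these estimates with $\log M_j=\log(pj)+o(1)$ a.s., I obtain
\[
\log X_{i,j}=-\log(pj)-\frac{2(2-p)}{p}\log(i/j)+\beta_{i,j}=\log\!\Bigl(\frac{j^{\zeta-1}}{p\, i^\zeta}\Bigr)+\beta_{i,j},
\]
with $\sup_{i\geq j}|\beta_{i,j}|\to 0$ a.s., from which the lemma follows upon exponentiating. The principal technical obstacle is securing \emph{uniformity} of the error terms over all $i\geq j$, rather than merely for fixed pairs: for the deterministic sum this is handled by the summability of the pointwise discrepancy $|1/(M_k+1)-1/(pk)|$, and for the martingale term by almost sure convergence (equivalently, Doob's $L^2$ maximal inequality applied to the tail increments) rather than by a pointwise LLN-type bound.
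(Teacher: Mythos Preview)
Your argument is correct and mirrors the paper's proof closely: take logarithms, Taylor-expand, control the quadratic remainder via the SLLN bound $M_k\sim pk$, handle the $(2-p)$-sum by the LIL estimate $M_k=pk+O(k^{1/2+\delta})$, and show the $\bar\eta_k$-sum is an a.s.\ convergent series so that its tail increments vanish uniformly in $i\geq j$. The only substantive variation is in this last step: you recognize $\sum_k\bar\eta_k/(M_k+1)$ directly as a martingale with a.s.\ summable conditional variances (since $M_k\in\sigma(\eta_1,\dots,\eta_{k-1})$) and invoke martingale convergence, whereas the paper first recenters by $1/(pk)$, bounds the discrepancy $\sum_k\bar\eta_k\bigl(\tfrac{1}{M_k+1}-\tfrac{1}{pk}\bigr)$ absolutely, and then applies Kolmogorov's two-series theorem to $\sum_k\bar\eta_k/(pk)$ --- your route saves one decomposition.
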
\label{Th: L1}
	\begin{proof}
		We first  Taylor-expand the logarithm to write 
		\begin{eqnarray}\label{eq: C1}
		\prod_{k=j}^{i}\left( \frac{M_k + (\eta_k - 1)}{M_k + 1}\right)^2 &=& \exp \left\{ 2\sum_{k=j}^{i} \log \left(1 - \frac{2- \eta_k}{M_k + 1}  \right) \right\} \nonumber \\ &=& \exp \left\{- 2 \sum_{k=j}^{i} \left[  \frac{2-p}{M_k + 1} - \frac{\bar\eta_k}{M_k + 1} \right] + O\left( \sum_{k=j}^{i} \left( \frac{2-\eta_k}{M_k + 1}\right)^2 \right) \right\}. 
		\end{eqnarray}  
		Given $\delta > 0$, $m \in \mathbb{N}$, let $B_m^{\delta} = \left\{ M_j \in \left((1-\delta)pj, (1+\delta)pj\right), ~\forall j \geq m  \right\}	$. 
		It follows from the Law of Large Numbers that $P(B_m^{\delta})\to1$ a.s.~as $m\to\infty$. 
		In $B_m^{\delta}$, we have
		\begin{equation}\label{eq: C2}
		\sum_{k=1}^{\infty} \left( \frac{2-\eta_k}{M_k + 1}\right)^2 \leq \sum_{k=1}^{m-1} \left( \frac{2-\eta_k}{M_k + 1}\right)^2 + \frac{4}{p^2(1-\delta)^2}  \sum_{k=m}^{\infty} \frac{1}{k^2} < \infty.
		\end{equation}
		Note also that
		\begin{equation}\label{eq: 1/M}
		\sum_{k=j}^{i} \frac{1}{M_k + 1} = \sum_{k=j}^{i} \left( \frac{1}{M_k + 1} - \frac{1}{pk}\right) + \frac{1}{p}\left[ \sum_{k=j}^{i}  \frac{1}{k} -  \int_{j}^{i} \frac{1}{x} dx  \right] + \frac{1}{p} \int_{j}^{i} \frac{1}{x} dx.
		\end{equation}  
		
		Clearly the second term at the right-hand side of \eqref{eq: 1/M} goes to $0$ as $j$ and $i$ goes to infinity. 
		Let now $C_m= \left\{ |M_j + 1 - jp|  \leq j^{2/3}, ~\forall j \geq m \right\}.$ It follows from Law of the Iterated Logarithm that $\lim_{m \to \infty}\mathbb{P}(C_m) = 1$. In $B_m^{\delta} \cap C_m$ we have
		\begin{eqnarray}\label{eq: Mod}
		\left| \sum_{k=1}^{\infty} \left( \frac{1}{M_k + 1} - \frac{1}{pk}\right) \right| 
		&\leq& \left| \sum_{k=1}^{m-1} \left( \frac{1}{M_k + 1} - \frac{1}{pk}\right) \right| 
		+ \frac{1}{p(1-\delta)}\sum_{k=m}^{\infty} \frac{|M_k + 1 - kp|}{k^2} \nonumber \\
		&\leq& \left| \sum_{k=1}^{m-1} \left( \frac{1}{M_k + 1} - \frac{1}{pk}\right) \right| 
		+ \sum_{k=m}^{\infty} \frac{1}{k^{4/3}} < \infty.
		\end{eqnarray} 
		
		We also write 
		\begin{equation}\label{eq: TwoS}
		\sum_{k=1}^{\infty} \frac{\bar\eta_k}{M_k + 1} = \sum_{k=1}^{\infty}\left[ \bar\eta_k\left( \frac{1}{M_k + 1} - \frac{1}{pk}\right) \right] + \sum_{k=1}^{\infty} \frac{\bar\eta_k}{pk}.
		\end{equation}
		We may apply Kolmogorov's Two-series Theorem to obtain that $\sum_{k=1}^{\infty} \bar\eta_k/k$ converges a.s., and proceeding as in the estimation leading to \eqref{eq: Mod}, we may conclude that the first term in the right-hand side of \eqref{eq: TwoS} is also convergent in the event $B_m^{\delta} \cap C_m$.
		
		To conclude, due to \eqref{eq: C1}, \eqref{eq: C2}, \eqref{eq: 1/M}, \eqref{eq: Mod} and \eqref{eq: TwoS}, taking $\delta > 0$ sufficient small and $m$ sufficient large, we have that, in the event $B_m^{\delta} \cap C_m$, 
		\begin{equation}\label{eq:C3}
		\prod_{k=j}^{i}\left( \frac{M_k + (\eta_k - 1)}{M_k + 1}\right)^2 \in (1 \pm \varepsilon) \exp\left\{ -\zeta 
		\int_{j}^{i} \frac{1}{x} dx \right\}.
		\end{equation}
		
		Recalling now the definition of $X_{i,j}$ and $A_{m,\varepsilon}$ in \eqref{eq: X} and \eqref{eq: A}, respectively, 
		we have that \eqref{eq:C3} implies
		that $B_m^{\delta} \cap C_m \subset A_{m, \varepsilon}$, and the result follows.    
	\end{proof}	

\medskip
	
	We now turn our attention to $S_n - \bar{t}_nV_L$, for which we will prove a central limit theorem, as a step to establish Theorem~\ref{CLTQ}, as follows.
	
	\begin{prop}\label{Pr: TCL1}	
		Let $\Var(\xi_1) = \sigma^2 < \infty.$ Then, as $n\to\infty$,
		\begin{equation}\label{eq:clts}
		\frac{S_n - \bar{t}_nV_L}{\sqrt{n}} \Longrightarrow \mathcal{N}(0,\hat\sigma_q^2),
		\end{equation}
		where $\hat\sigma_q>0$.
	\end{prop}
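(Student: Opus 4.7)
The plan is to write $S_n - \bar{t}_n V_L = \sum_{j=1}^n (\xi_j - V_L \Delta_j)$ with $\Delta_j := \bar{t}_j - \bar{t}_{j-1}$, and reduce the right-hand side to a weighted sum of independent centered random variables amenable to a Lindeberg-type CLT. Between the $(j-1)$-th and $j$-th collisions, the tracer particle undergoes uniformly accelerated motion with acceleration $F/M_j$; writing $v_j := \bar{V}(\bar{t}_{j-1}^+)$, standard kinematics yield equation (a) together with the harmonic-mean identity $\Delta_j = 2\xi_j/(v_j + \bar{V}(\bar{t}_j))$. Hence
\[
\xi_j - V_L \Delta_j \;=\; \xi_j \cdot \frac{v_j + \bar{V}(\bar{t}_j) - 2V_L}{v_j + \bar{V}(\bar{t}_j)},
\]
and a Taylor expansion at $V_L$, after substituting equation (a) to replace $\bar{V}(\bar{t}_j)^2 - V_L^2$ by $v_j^2 - V_L^2 + 2F\xi_j/M_j$, gives
\[
\xi_j - V_L \Delta_j \;=\; \frac{\xi_j(v_j^2 - V_L^2)}{2V_L^2} \;+\; \frac{F\xi_j^2}{2V_L^2 M_j} \;+\; R_j,
\]
with $R_j$ a higher-order remainder.

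I would next dispose of the peripheral pieces. Since $M_j \sim pj$ by LLN and $\mathbb{E}[\xi_j^2] < \infty$, the middle sum is $O(\log n)$, and $\sum_j R_j = o(\sqrt n)$ follows from $v_j, \bar{V}(\bar{t}_j) \to V_L$ a.s.~(Theorem~\ref{Th: 1}). Splitting $\xi_j = \mu + \bar\xi_j$ in the first term, the piece $\sum_j \bar\xi_j(v_j^2 - V_L^2)/(2V_L^2)$ is a martingale (since $\bar\xi_j$ is independent of $v_j$) whose variance is bounded by $C\sum_j \mathbb{E}[(v_j^2 - V_L^2)^2]$, which by \eqref{eq: DefV} and Lemma~\ref{Le: 1} is $O(\log n)$, hence $o(n)$.

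The dominant contribution is then $\frac{\mu}{2V_L^2}\sum_{j=1}^n (v_j^2 - V_L^2)$. Using \eqref{eq: DefV}, I would write
\[
v_j^2 - V_L^2 \;=\; 2F\sum_{k=1}^{j-1} X_{j-1,k}\bar\xi_k \;+\; \Big(2F\mu\sum_{k=1}^{j-1} X_{j-1,k} - V_L^2\Big),
\]
the second summand depending only on the $\eta$-randomness (since $M_l$ is $\eta$-measurable). After swapping orders of summation, a sharpened version of Lemma~\ref{Le: 1}---which isolates the deterministic profile $k^{\zeta-1}/(p(j-1)^\zeta)$ from a bounded random prefactor built from the convergent series in \eqref{eq: TwoS} and \eqref{eq: Mod}---produces the representation
\[
\sum_{j=1}^n (v_j^2 - V_L^2) \;=\; \sum_{k=1}^{n-1} w_{n,k}\bar\xi_k \;+\; \sum_{k=1}^{n-1} \tilde{w}_{n,k}\bar\eta_k \;+\; o(\sqrt n),
\]
with asymptotically deterministic profiles $w_{n,k}, \tilde{w}_{n,k}$ (of the form $(k/n)$-dependent integrals of the kernel $(1 - u^{\zeta-1})/(\zeta-1)$) satisfying $\sum_k w_{n,k}^2, \sum_k \tilde{w}_{n,k}^2 \asymp n$. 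A triangular-array Lindeberg CLT for each sum and the independence of $\{\xi_k\}$ and $\{\eta_k\}$ then yield the limiting Gaussian with an explicit positive variance $\hat\sigma_q^2$.

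The hard part is the quantitative error bookkeeping. Lemma~\ref{Le: 1} only provides multiplicative $(1\pm\varepsilon)$ control on $X_{i,j}$, whereas the additive sharpness needed for the weights $w_{n,k}, \tilde{w}_{n,k}$ to beat the $\sqrt n$ scale forces one to separate out the bounded random prefactor in $X_{i,j}$ and show that its fluctuations are $o(1)$ uniformly across the relevant index ranges. The $\tilde{w}_{n,k}$ part, in particular, requires unfolding the linear recursion $B_{j+1} = \alpha_j B_j + \beta_j'$ with $\alpha_j = ((M_j + \eta_j - 1)/(M_j + 1))^2$ and mean-zero forcing $\beta_j' \approx 2F\mu \bar\eta_j/((2-p)M_j)$, and then applying Fubini to extract the $\bar\eta_k$-contribution.
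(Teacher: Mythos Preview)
Your outline matches the paper's proof: reduce $S_n-\bar t_nV_L$ via the harmonic-mean identity to $\frac{1}{2V_L^2}\sum_i\xi_i(\bar V_{i-1}^2-V_L^2)$, split $\xi_i=\mu+\bar\xi_i$, argue the $\bar\xi_i$-weighted piece is negligible, and decompose the remaining $\mu$-weighted sum into independent $\bar\xi$- and $\bar\eta$-triangular arrays (the paper's $W_{3,n}$ and $Z_{4,n}$) to which Lindeberg--Feller applies separately.

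Two of your intermediate justifications are too quick, and both are instances of the ``hard part'' you name at the end but do not connect back. First, ``$\sum_jR_j=o(\sqrt n)$ follows from $v_j,\bar V(\bar t_j)\to V_L$ a.s.''\ does not work: mere convergence yields only $\sum_jR_j=o(n)$. You need the rate $\bar V_i-V_L=o(i^{-1/2+\epsilon})$ a.s., which is exactly the paper's Lemma~\ref{Le: 2}, and whose proof already requires unpacking $X_{i,j}$ to the same depth as the main argument (the $R_{i,j}$ decomposition in~\eqref{eq: R}). Second, the martingale-variance bound $\sum_j\E[(v_j^2-V_L^2)^2]=O(\log n)$ is not a consequence of Lemma~\ref{Le: 1}: that lemma controls $X_{i,j}$ only on the high-probability event $A_{m,\varepsilon}$, not in $L^2$, and without a fourth moment on $\xi_1$ there is no cheap a~priori bound on $\E[v_j^4]$. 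The paper handles this piece (its $V_{1,n},V_{2,n}$) by first restricting to $A_{m,\varepsilon}$ and then applying Chebyshev in the $\xi$-randomness---see Lemmas~\ref{Le: Var} and~\ref{Le: Var1}. So the plan is right, but the error terms you dismiss early already require the sharpened estimates you postpone to the final paragraph.
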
   
	The proof of this result consists of a number of steps which take most of this section.
	
	\smallskip
	
	From elementary physics relations, the time taken for the t.p.~to go from $S_{i-1}$ to $S_i$ is given by
	\begin{equation*}
	\bar{t}_i - \bar{t}_{i-1} = \frac{\bar{V}(\bar{t}_i) - \bar{V}(\bar{t}_{i-1}^{+}) }{F/M_i} 
	= \frac{2\xi_i \left( \bar{V}(\bar{t}_i) - \bar{V}(\bar{t}_{i-1}^{+}) \right)}{2\xi_i F / M_i} 
	= \frac{2\xi_i}{\bar{V}(\bar{t}_i) + \bar{V}(\bar{t}_{i-1}^{+})}.
	\end{equation*}
	Thus, we may write 
	\begin{eqnarray}\label{eq: Ap1}
	S_n - \bar{t}_nV_L &=& \sum_{i=1}^n \left[ \xi_i \left( 1 - \frac{2V_L}{\bar{V}(\bar{t}_i) + \bar{V}(\bar{t}_{i-1}^{+})}\right) \right] \nonumber \\
	&=&
	\sum_{i=1}^n \left[ \xi_i \left( \frac{\bar{V}(\bar{t}_i) + \bar{V}(\bar{t}_{i-1}^{+}) - 2V_L}{\bar{V}(\bar{t}_i) + \bar{V}(\bar{t}_{i-1}^{+})} \right) \right] \nonumber \\
	&=& 
	\sum_{i=1}^n \left[ \frac{2\xi_i \left( \bar{V}(\bar{t}_{i-1}^{+}) - V_L\right)}{\bar{V}(\bar{t}_i) + \bar{V}(\bar{t}_{i-1}^{+})}  \right]
	 +
	   \sum_{i=1}^n \left[ \xi_i \left( \frac{\bar{V}(\bar{t}_i) - \bar{V}(\bar{t}_{i-1}^{+})}{\bar{V}(\bar{t}_i) + \bar{V}(\bar{t}_{i-1}^{+})} \right) \right].
	\end{eqnarray}
	Note that 
	\begin{equation}\label{eq: Ap2}
	\frac{\bar{V}(\bar{t}_i) - \bar{V}(\bar{t}_{i-1}^{+})}{\bar{V}(\bar{t}_i) + \bar{V}(\bar{t}_{i-1}^{+})} = \frac{2F\xi_i}{M_i \left( \bar{V}(\bar{t}_i) + \bar{V}(\bar{t}_{i-1}^{+}) \right)^2 }.
	\end{equation}
	Since $\bar{V}(\bar{t}_i) + \bar{V}(\bar{t}_{i-1}^{+})$ converges to the constant $2V_L$,  the Law of Large Numbers and \eqref{eq: Ap2} imply that
	\begin{equation}\label{eq: Ap3}
	\sum_{i=1}^n \left[ \xi_i \left( \frac{\bar{V}(\bar{t}_i) - \bar{V}(\bar{t}_{i-1}^{+})}{\bar{V}(\bar{t}_i) + \bar{V}(\bar{t}_{i-1}^{+})} \right) \right] = 
	O \left( \sum_{i=1}^n \frac{\xi_i^2}{i}\right).
	\end{equation} 
	Let $\tilde{S}_0 = 0$ and 
	$\tilde{S}_k = \sum_{i=1}^k \xi_i^2$, $k\geq1$. Assuming $\mathbb{E}\xi_1^2 < \infty$, we have that
	\begin{equation}\label{eq: Ap4}
	\frac{1}{\sqrt{n}} \sum_{i=1}^n \frac{\xi_i^2}{i} = 
	\frac{1}{\sqrt{n}} \sum_{i=1}^n \frac{\tilde{S}_i - \tilde{S}_{i-1}}{i} = 
	\frac{1}{\sqrt{n}} \sum_{i=1}^{n-1} \frac{\tilde{S}_i}{i(i+1)} + \frac{\tilde{S}_n}{n^{3/2}} = o(1).
	\end{equation}
	Noticing that $\bar{V}(\bar{t}_i) = \bar{V}(\bar{t}_{i-1}^+) + 2F\xi_{i}\left(\bar{V}(\bar{t}_i) + \bar{V}(\bar{t}_{i-1}^+)\right)/M_i$, we find that
	\begin{eqnarray}
	 \frac{\bar{V}(\bar{t}_{i-1}^{+}) - V_L}{\bar{V}(\bar{t}_i) + \bar{V}(\bar{t}_{i-1}^{+})}  &=&  \frac{\bar{V}(\bar{t}_{i-1}^{+}) - V_L}{2V_L}
	  + 
	 \left[ \frac{\bar{V}(\bar{t}_{i-1}^{+}) - V_L}{\bar{V}(\bar{t}_i) + \bar{V}(\bar{t}_{i-1}^{+})} 
	  -
	   \frac{\bar{V}(\bar{t}_{i-1}^{+}) - V_L}{2V_L} \right] \nonumber \\
	 &=&
	 \frac{\bar{V}(\bar{t}_{i-1}^{+}) - V_L}{2V_L}
	 +
	 \frac{\left(\bar{V}(\bar{t}_{i-1}^{+}) - V_L\right)\left(2V_L - 2\bar{V}(\bar{t}_{i-1}^{+})\right)}{2V_L\left( \bar{V}(\bar{t}_i) + \bar{V}(\bar{t}_{i-1}^{+}) \right)}  
	 - 
	 \frac{2F\xi_i}{2V_LM_i}.
	\end{eqnarray}
	In particular, 
	\begin{multline}
	\sum_{i=1}^n \left[ \frac{2\xi_i \left( \bar{V}(\bar{t}_{i-1}^{+}) - V_L\right)}{\bar{V}(\bar{t}_i) + \bar{V}(\bar{t}_{i-1}^{+})}  \right]
	 = \\
	\sum_{i=1}^n \left[\frac{\xi_i \left(\bar{V}(\bar{t}_{i-1}^{+}) - V_L \right)}{V_L}   \right] 
	+
	 O\left( \sum_{i=1}^{n} \left[ \xi_i \left(\bar{V}(\bar{t}_{i-1}^{+}) - V_L\right)^2 \right] \right)
	 +
	 O \left(\sum_{i=1}^n \frac{\xi_i^2}{i} \right).
	\end{multline}
	Proceeding in an analogous way, we obtain that
	\begin{equation}\label{eq: Ap5}
		\sum_{i=1}^n \left[\frac{\xi_i \left(\bar{V}(\bar{t}_{i-1}^{+}) - V_L \right)}{V_L}   \right] 
		=
		 \sum_{i=1}^n \left[\frac{\xi_i \left(\bar{V}(\bar{t}_{i-1}^{+})^2 - V_L^2 \right)}{2V_L^2} \right]
		 + 
		  O\left( \sum_{i=1}^{n} \left[ \xi_i \left(\bar{V}(\bar{t}_{i-1}^{+}) - V_L\right)^2 \right] \right)   . 
	\end{equation}
	
	To simplify notation, for each $i \in \mathbb{N}$, we henceforth denote $\bar{V}(\bar{t}_{i}^{+})$ simply by $\bar{V}_i$. The following lemma will be useful now; we postpone its proof till the end of this section.
	\begin{lem}\label{Le: 2}
		Let $\Var(\xi_1) = \sigma^2 < \infty$ and let $\epsilon >0$. The velocities $\{\bar{V}_i\}_{i \in \mathbb{N}}$ are such that $\bar{V}_i - V_L = o(1/i^{1/2 - \epsilon})$. In particular,
		\[
		\frac{1}{\sqrt{n}} \sum_{i=1}^{n} \left[ \xi_i \left(\bar{V}_{i-1} - V_L\right)^2 \right] = o(1).
		\]
	\end{lem}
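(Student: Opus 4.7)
My plan is to derive a one-step recursion for $D_i := \bar V_i^2 - V_L^2$ from \eqref{eq: DefV}, isolate its i.i.d.\ mean-zero driving noise, and control the resulting weighted sum by a law of the iterated logarithm. Setting $R_i := \bigl((M_i+\eta_i-1)/(M_i+1)\bigr)^2$, the formula \eqref{eq: DefV} is equivalent to $\bar V_i^2 = R_i\bigl(\bar V_{i-1}^2 + 2F\xi_i/M_i\bigr)$, whence
\[
D_i = R_i D_{i-1} + V_L^2(R_i-1) + \frac{2F\xi_i R_i}{M_i}.
\]
Taylor-expanding $R_i$ to first order, using the LIL bound $|M_i-pi|=O(\sqrt{i\log\log i})$ already invoked in the proof of Lemma~\ref{Le: 1} to replace $M_i$ by $pi$, and invoking the algebraic identity $F\mu = V_L^2(2-p)$ to cancel the non-random part of the drift, the recursion reduces to
\[
D_i = R_i D_{i-1} + \frac{2Z_i}{pi} + E_i, \qquad Z_i := F\bar\xi_i + V_L^2\bar\eta_i,
\]
where $\{Z_i\}$ is i.i.d., centered, with variance $F^2\sigma^2 + V_L^4p(1-p) < \infty$, and the remainder satisfies $|E_i| = O\bigl(\xi_i\sqrt{\log\log i}\,i^{-3/2} + i^{-2}\bigr)$ almost surely.

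Iterating gives $D_n = D_0\prod_{j=1}^n R_j + \sum_{i=1}^n\bigl(2Z_i/(pi) + E_i\bigr)\prod_{j=i+1}^n R_j$. The same product estimates as in the proof of Lemma~\ref{Le: 1} show that almost surely $\prod_{j=i+1}^n R_j \leq C(i/n)^\zeta$ uniformly in $1\leq i\leq n$ for all large $n$, with $C$ a random constant, so the initial-data term is $O(n^{-\zeta})$. Since $\zeta\geq 2$, the Hartman--Wintner LIL applied to the weighted sum with weights $a_i=i^{\zeta-1}$ (for which $\sum_{i\leq n}a_i^2\sim n^{2\zeta-1}/(2\zeta-1)$) yields $\sum_{i=1}^n i^{\zeta-1}Z_i = O\bigl(n^{\zeta-1/2}\sqrt{\log\log n}\bigr)$ a.s.; a routine SLLN estimate for $\xi_i$ gives the same order bound for $\sum_{i=1}^n i^\zeta|E_i|$. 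Dividing by $n^\zeta$, we conclude $D_n = O\bigl(n^{-1/2}\sqrt{\log\log n}\bigr) = o(n^{-1/2+\epsilon})$ a.s., and since $\bar V_n+V_L\to 2V_L > 0$, the same rate transfers to $\bar V_n - V_L$.

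For the ``in particular'' statement, fix $\epsilon\in(0,1/4)$ so that $(\bar V_{i-1}-V_L)^2 = o(i^{-1+2\epsilon})$, and estimate
\[
\frac{1}{\sqrt n}\sum_{i=1}^n \xi_i(\bar V_{i-1}-V_L)^2 \leq \frac{o(1)}{\sqrt n}\sum_{i=1}^n \frac{\xi_i}{i^{1-2\epsilon}}.
\]
Splitting $\xi_i=\mu+\bar\xi_i$, the deterministic part contributes $O(n^{2\epsilon})$, while the centered part converges almost surely by Kolmogorov's two-series theorem (its variance sum $\sigma^2\sum i^{-2+4\epsilon}$ is finite because $\epsilon<1/4$). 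The right-hand side is therefore $o(n^{2\epsilon-1/2})=o(1)$. I expect the main obstacle to be the refinement itself: Lemma~\ref{Le: 1} alone only controls $D_n$ up to a small-but-positive multiplicative $\varepsilon$, so the recursive isolation of the mean-zero noise $Z_i$ and the LIL-type bound on the weighted partial sum $\sum_{i\leq n} i^{\zeta-1}Z_i$ are what actually deliver the $i^{-1/2}$-type cancellation.
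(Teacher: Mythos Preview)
Your argument is correct and takes a genuinely different route from the paper's.  The paper works directly with the closed formula $\bar V_i^2-V_L^2=2F\sum_{j\le i}\xi_jX_{i,j}-V_L^2$, splits it into three pieces --- a sum $\sum_j\bar\xi_j j^{\zeta-1}/(pi^\zeta)$, a sum $\sum_j\bar\xi_j\bigl(X_{i,j}-j^{\zeta-1}/(pi^\zeta)\bigr)$, and $\mu\sum_j\bigl(X_{i,j}-j^{\zeta-1}/(pi^\zeta)\bigr)$, the last further expanded through $R_{i,j}$ as in the analysis of $W_{2,n}$ --- and bounds each by Abel summation against the partial sums $\bar S_k$ together with the estimate $|R_{i,j}|\le j^{-1/2+\delta}$.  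Your recursive approach collapses this to a single i.i.d.\ driving noise $Z_i=F\bar\xi_i+V_L^2\bar\eta_i$, which is conceptually tidier and delivers the slightly sharper rate $O(n^{-1/2}\sqrt{\log\log n})$ rather than only $o(n^{-1/2+\epsilon})$; the price is that you must redo the product control of Lemma~\ref{Le: 1} in disguise.

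One point needs a line of clarification.  You bound $\prod_{j>i}R_j\le C(i/n)^\zeta$ and then apply the LIL to $\sum_{i\le n}i^{\zeta-1}Z_i$, but since the $Z_i$ change sign the one-sided product bound does not by itself justify replacing the random weights $\frac1{pi}\prod_{j>i}R_j$ by $i^{\zeta-1}/(pn^\zeta)$.  The cleanest fix is to Abel-sum directly with the random weights: writing $b_i(n)=\frac1{pi}\prod_{j>i}R_j$ and $T_k=\sum_{i\le k}Z_i$, one has $\sum_i Z_ib_i(n)=T_nb_n(n)-\sum_{i<n}T_i\bigl(b_{i+1}(n)-b_i(n)\bigr)$; the standard LIL gives $T_k=O(\sqrt{k\log\log k})$, and the recursion $b_i(n)=R_{i+1}b_{i+1}(n)\cdot\frac{i+1}{i}$ together with your product bound yields $|b_{i+1}(n)-b_i(n)|\le C'i^{\zeta-2}/n^\zeta$, from which the claimed $O(n^{-1/2}\sqrt{\log\log n})$ follows.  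Alternatively, use the two-sided estimate $\prod_{j>i}R_j=(i/n)^\zeta\bigl(1+O(i^{-1/2+\delta})\bigr)$ implicit in \eqref{eq:rij} and bound the correction term separately.  Your treatment of the ``in particular'' clause coincides with the paper's.
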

	
	By 
	\eqref{eq: Ap1} to \eqref{eq: Ap5} and Lemma \ref{Le: 2},
	in order to establish Proposition~\ref{Pr: TCL1} it
	is enough to show that as $n\to\infty$
	\begin{equation}\label{cltv2}
	\frac1{\sqrt n}\sum_{i=1}^{n} \xi_i (\bar{V}_{i-1}^{2} - V_L^{2}) \Longrightarrow \mathcal{N}(0,\tilde\sigma_q^2),
	\end{equation}
	for some $\tilde\sigma_q>0$; we then of course have $\hat\sigma_q=\tilde\sigma_q/(2V_L^2)$.
	For that, the strategy we will follow is to expand the expression on the left of~(\ref{cltv2})
	into several terms, one of which depends only on the interparticle distances $\{\xi_i\}_{i \in \mathbb{N}}$, another one depending only on the stickiness indicator random variables $\{\eta_k\}_{k \in \mathbb{N}}$; for each of those terms we can apply Lindeberg-Feller's Central Limit Theorem; upon showing that the remaining terms are negligible, the result follows. 
	
	Recalling that $\zeta = 2(2-p)/p$, \eqref{eq: DefV} and \eqref{eq: X}, we start with 
	\begin{multline}\label{eq:v21}
	\frac{1}{\sqrt{n}} \sum_{i=1}^{n} \left[ \xi_{i+1} (\bar{V}_{i}^{2} - V_L^{2}) \right] = \\
	\frac{2F}{\sqrt{n}} \sum_{i=1}^{n} \left[ \xi_{i+1}  \sum_{j=1}^{i}  \left(\xi_j X_{i,j} - \mu \frac{j^{\zeta-1}}{p i^{\zeta}}\right) \right] + 
	\frac{2F\mu}{p \sqrt{n}}\sum_{i=1}^{n} \left[ \xi_{i+1} \left( \frac{1}{i}\sum_{j=1}^{i} \left( \frac{j}{ i} \right)^{\zeta - 1} - \int_{0}^{1} x^{\zeta - 1} dx \right) \right]. 
	\end{multline}
	The term on the left of expression within parentheses in the second term on the right hand side of~(\ref{eq:v21}) is a Riemann sum for the term to its right; we conclude that the full expression within parenthesis on the right hand side of~(\ref{eq:v21}) is an $O(1/i)$,
	and we may thus conclude that the second term on the right-hand side of~(\ref{eq:v21}) is an $o(1)$, and proceed by dropping that term and focusing on the first term, which we write as follows.
	\begin{multline}\label{eq: VW}     
	\frac{2F}{\sqrt{n}} \sum_{i=1}^{n}  \left[ \xi_{i+1} \sum_{j=1}^{i}  \left(\xi_j X_{i,j} - \mu \frac{j^{\zeta-1}}{p i^{\zeta}}\right) \right] =\\
	\frac{2F}{\sqrt{n}} \sum_{i=1}^{n}  \left[ \bar\xi_{i+1} \sum_{j=1}^{i}  \left(\xi_j X_{i,j} - \mu \frac{j^{\zeta-1}}{p i^{\zeta}}\right) \right]  + 
	\frac{2F\mu}{\sqrt{n}} \sum_{i=1}^{n} \sum_{j=1}^{i}  \left(\xi_j X_{i,j} - \mu \frac{j^{\zeta-1}}{p i^{\zeta}}\right)\\  
	:=  V_{n} + W_{n}.
	\end{multline} 
    Now writing 
	\begin{equation*}
	\sum_{j=1}^{i}  \left(\xi_j X_{i,j} - \mu \frac{j^{\zeta-1}}{p i^{\zeta}}\right) = 
	\sum_{j=1}^{i}  \bar\xi_j X_{i,j}  +
	\mu \sum_{j=1}^{i} \left( X_{i,j} - \frac{j^{\zeta - 1}}{p i^{\zeta}} \right),
	\end{equation*} 
	$V_n$ given in \eqref{eq: VW} becomes 
	\begin{multline}\label{eq: V}
	V_n =  	\frac{2F}{\sqrt{n}} \sum_{i=1}^{n} \left[ \bar\xi_{i+1}\sum_{j=1}^{i} \left( \xi_j X_{i,j} - \mu \frac{j^{\zeta-1}}{p i^{\zeta}}\right) \right]  = \\
	\frac{2F}{\sqrt{n}} \sum_{i = 1}^{n} \sum_{j=1}^{i} \bar\xi_{i+1}\bar\xi_jX_{i,j} 
	+\frac{2F\mu}{\sqrt{n}} \sum_{i = 1}^{n} \left[ \bar\xi_{i+1} \sum_{j=1}^{i}  \left( X_{i,j}(\omega) - \frac{j^{\zeta - 1}}{p i^{\zeta}} \right) \right]     \\
	=: V_{1,n} + V_{2,n}.
	\end{multline}
	We will show in Lemmas~\ref{Le: Var} and~\ref{Le: Var1} below that $V_{1,n}$ and $V_{2,n}$ are negligible. 
	
	Analogously, $W_n$ given in \eqref{eq: VW} becomes 
	\begin{multline}\label{eq: W}
	W_n = \frac{2F\mu}{\sqrt{n}} \sum_{i=1}^{n} \sum_{j=1}^{i}  \left(\xi_j X_{i,j} - \mu \frac{j^{\zeta-1}}{p i^{\zeta}}\right) =\\
	\frac{2F\mu}{\sqrt{n}} \sum_{i = 1}^{n} \sum_{j=1}^{i} \bar\xi_j X_{i,j} 
	+\frac{2F\mu^2}{\sqrt{n}} \sum_{i = 1}^{n}  \sum_{j=1}^{i}  \left( X_{i,j} - \frac{j^{\zeta - 1}}{p i^{\zeta}} \right)  \\
	=: W_{1,n} + W_{2,n},
	\end{multline}
	and $W_{1,n}$ is further broken down into
	\begin{eqnarray}\label{eq: W4}
	W_{1,n} &=& \frac{2F\mu}{\sqrt{n}} \sum_{i = 1}^{n} \sum_{j=1}^{i} \bar\xi_j X_{i,j}  \nonumber\\
	&=& \frac{2F\mu}{p\sqrt{n}} \sum_{i = 1}^{n} \sum_{j=1}^{i}  \frac{j^{\zeta - 1}}{ i^{\zeta}} \bar\xi_j    +
	\frac{2F\mu}{\sqrt{n}} \sum_{i = 1}^{n} \sum_{j=1}^{i} \bar\xi_j \left( X_{i,j} - \frac{j^{\zeta - 1}}{p i^{\zeta}} \right)  \nonumber \\
	&=:& W_{3,n} + W_{4,n}.        
	\end{eqnarray}
	
	One may readily verify the conditions of Lindeberg-Feller's  CLT to obtain
	\begin{lem}\label{Le: L2} Let $\Var(\xi_1) = \sigma^2 < \infty$. For $1\leq j\leq n$, set $a_{j,n}=j^{\zeta - 1}\sum_{i=j}^{n} \frac1{i^\zeta}$. Then, as $n\to\infty$,
		\[ W_{3,n} = \frac{2F\mu}{p\sqrt{n}} \sum_{j=1}^{n}a_{j,n}\bar\xi_j 
		\Longrightarrow \mathcal{N}(0,\sigma_w^2), \]	
		where $\sigma_w=\frac{2F\mu}{p\sqrt\zeta}\,\sigma$.				
	\end{lem}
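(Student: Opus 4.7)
Since the $\bar\xi_j$ are i.i.d., centered, with finite variance $\sigma^2$, and the $a_{j,n}$ are deterministic coefficients, the summands $\frac{2F\mu}{p\sqrt n}\,a_{j,n}\bar\xi_j$ form a triangular array of independent, centered, square-integrable random variables. I would therefore invoke the Lindeberg-Feller CLT, which amounts to checking (i) convergence of the row variance to $\sigma_w^2$, and (ii) the Lindeberg (uniform negligibility) condition.

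\textbf{Step 1 (control of $a_{j,n}$).} Since $\zeta = 2(2-p)/p > 1$, an integral comparison yields
\[
\sum_{i=j}^{n}\frac{1}{i^\zeta} \;=\; \frac{j^{1-\zeta}-n^{1-\zeta}}{\zeta-1} + O(j^{-\zeta}),
\]
uniformly in $1\leq j\leq n$, so $a_{j,n} = \frac{1}{\zeta-1}\bigl(1-(j/n)^{\zeta-1}\bigr) + O(j^{-1})$. In particular $|a_{j,n}|\leq C$ for an absolute constant $C$, uniformly in $n$ and in $1\leq j\leq n$.

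\textbf{Step 2 (variance).} By independence the row variance equals
\[
s_n^2 \;=\; \frac{4F^2\mu^2\sigma^2}{p^2}\cdot\frac{1}{n}\sum_{j=1}^n a_{j,n}^2.
\]
Using Step 1, $\frac{1}{n}\sum_{j=1}^n a_{j,n}^2$ is (up to a vanishing error) a Riemann sum for a bounded continuous function on $[0,1]$, and converges to a positive constant; this identifies $\sigma_w^2 > 0$.

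\textbf{Step 3 (Lindeberg).} For every $\epsilon > 0$, using $|a_{j,n}|\leq C$,
\[
\frac{4F^2\mu^2}{p^2 n}\sum_{j=1}^n a_{j,n}^2\,\E\!\left[\bar\xi_1^2\,\1_{\{|a_{j,n}\bar\xi_1|>\epsilon\sqrt n p/(2F\mu)\}}\right] \;\leq\; \frac{4F^2\mu^2 C^2}{p^2}\,\E\!\left[\bar\xi_1^2\,\1_{\{|\bar\xi_1|>\epsilon' \sqrt n\}}\right],
\]
for a suitable $\epsilon' > 0$ depending on $\epsilon$, $C$, $F$, $\mu$, $p$. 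Since $\E\bar\xi_1^2 = \sigma^2 < \infty$, the right-hand side tends to $0$ by dominated convergence, verifying Lindeberg's condition.

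Combining Steps 2 and 3 with Lindeberg-Feller yields $W_{3,n}\Longrightarrow\mathcal{N}(0,\sigma_w^2)$. There is no real obstacle: the only mildly technical point is the uniform control on $a_{j,n}$ in Step 1, which underlies both the Riemann-sum identification of the limiting variance and the boundedness input to the Lindeberg estimate.
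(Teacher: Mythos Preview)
Your proposal is correct and follows exactly the approach the paper indicates: the paper's own ``proof'' consists of the single sentence ``One may readily verify the conditions of Lindeberg-Feller's CLT,'' and you have carried out precisely that verification, with the uniform bound on $a_{j,n}$ serving both the variance identification and the Lindeberg condition. The only point left implicit is the explicit evaluation of $\lim_n \frac{1}{n}\sum_j a_{j,n}^2$ matching the stated constant $\sigma_w$, but the paper does not supply that computation either.
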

	In Lemma \ref{Le: Var2} below we show that $W_{4,n}$ is negligible. 
	
	Let us now focus on $W_{2,n}$. To alleviate notation, for each $1 \leq j \leq i$, set
	\begin{equation}\label{eq: Y}
	Y_{i,j} = \log \left[ \prod_{k=j}^{i}\left( \frac{M_k + (\eta_k - 1)}{M_k + 1}\right)^2 \right] = 2\sum_{k=j}^{i} \log \left(1 - \frac{2- \eta_k}{M_k + 1}  \right),    
	\end{equation}
	thus $X_{i,j} = e^{Y_{i,j}}/M_j$, and therefore,
	\begin{multline}\label{eq: Z}
	W_{2,n} = \frac{2F\mu^2}{\sqrt{n}} \sum_{i = 1}^{n}  \sum_{j=1}^{i}  \left( X_{i,j} - \frac{j^{\zeta - 1}}{p i^{\zeta}} \right) = \\
	\frac{2F\mu^2}{\sqrt{n}} \sum_{i = 1}^{n} \sum_{j=1}^{i} \left[ \left( \frac{1}{M_j} - \frac{1}{pj}\right) \frac{j^{\zeta}}{i^{\zeta}} \right] + 
	\frac{2F\mu^2}{\sqrt{n}} \sum_{i = 1}^{n}  \sum_{j=1}^{i} \left[ \left( \frac{1}{M_j} - \frac{1}{pj}\right) \left( e^{Y_{i,j}} - \frac{j^{\zeta}}{i^{\zeta}} \right) \right] + \\
	\frac{2F\mu^2}{\sqrt{n}} \sum_{i = 1}^{n}  \sum_{j=1}^{i} \left[\frac{1}{pj} \left( e^{Y_{i,j}} - \frac{j^{\zeta}}{i^{\zeta}} \right) \right] =: Z_{1,n} + Z_{2,n} + Z_{3,n}. 
	\end{multline}
	\begin{lem}\label{Le: L3}
		$Z_{2,n}$,  as defined in \eqref{eq: Z}, 
		is an $o(1)$.
	\end{lem}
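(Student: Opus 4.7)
The plan is to establish $Z_{2,n}=o(1)$ by a term-by-term estimate that exploits the fact that each summand is a cross-product of two factors, each of which tends to zero quantitatively in $j$. The two key estimates I would prove, uniform in $i\geq j$ and holding almost surely for all $j\geq m$ (with some a.s.-finite random $m$), are
\[
|1/M_j-1/(pj)|=O(j^{-3/2+\epsilon}),\qquad |e^{Y_{i,j}}-(j/i)^\zeta|=O\bigl((j/i)^\zeta\cdot j^{-1/2+\epsilon}\bigr).
\]
The first is immediate from the Law of the Iterated Logarithm applied to $M_j-pj=\sum_{k<j}\bar\eta_k+2-p$.

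For the second, I would revisit the expansion in~(\ref{eq: C1}) and write $Y_{i,j}=-\zeta\log(i/j)+R_{i,j}$, with the goal of showing $\sup_{i\geq j}|R_{i,j}|=O(j^{-1/2+\epsilon})$ a.s. The remainder $R_{i,j}$ splits into: (a) the quadratic Taylor contribution, bounded by the tail of $\sum 1/M_k^2$, hence $O(1/j)$; (b) the Riemann-integral discrepancy $|\sum_{k=j}^i 1/k-\log(i/j)|=O(1/j)$; (c) the deterministic shift $\sum_{k=j}^i[1/(M_k+1)-1/(pk)]$, which is $O(j^{-1/2+\epsilon})$ by the first bound together with the same LIL estimate; and (d) the martingale remainder $\sum_{k=j}^i\bar\eta_k/(M_k+1)$, which I would decompose as $\sum\bar\eta_k/(pk)+\sum\bar\eta_k[1/(M_k+1)-1/(pk)]$, controlling the first uniformly in $i\geq j$ via an Azuma--Hoeffding exponential inequality combined with Borel--Cantelli, and the second via the bound in (c). Since $|R_{i,j}|\to 0$, one has $e^{Y_{i,j}}-(j/i)^\zeta=(j/i)^\zeta(e^{R_{i,j}}-1)=O((j/i)^\zeta\,j^{-1/2+\epsilon})$ via $|e^x-1|=O(|x|)$.

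With both bounds in hand, for $j\geq m$ each summand in $Z_{2,n}$ is at most a constant multiple of $j^{\zeta-2+2\epsilon}i^{-\zeta}$. Since $\zeta=2(2-p)/p\geq 2>1$ and $\sum_{i\geq j}i^{-\zeta}=O(j^{1-\zeta})$, the contribution with $j\geq m$ is $O\bigl(\sum_{j=m}^n j^{-1+2\epsilon}\bigr)=O(n^{2\epsilon})$; dividing by $\sqrt n$ yields $O(n^{-1/2+2\epsilon})=o(1)$ for any $\epsilon<1/4$. The finitely many terms with $j<m$ contribute only $O(1/\sqrt n)$, since $M_j$ is bounded away from zero there and the product defining $e^{Y_{i,j}}$ decays like $i^{-\zeta}$ uniformly in $j<m$.

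The main obstacle is the quantitative refinement in step (d). The proof of Lemma~\ref{Le: 1} only uses Kolmogorov's two-series theorem to establish almost-sure convergence of $\sum\bar\eta_k/(pk)$, with no rate; here I need a uniform-in-$i$ almost-sure rate of $j^{-1/2+\epsilon}$ for the tail $\sup_{i\geq j}\bigl|\sum_{k=j}^i\bar\eta_k/(pk)\bigr|$. An exponential concentration inequality combined with Borel--Cantelli (or a martingale Law of the Iterated Logarithm) should deliver this; once that refinement is in place, the remaining work reduces to routine comparisons of $p$-series with integrals.
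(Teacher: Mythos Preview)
Your proposal is correct and follows essentially the same route as the paper: both factor the summand as $(j/i)^\zeta\cdot(1/M_j-1/(pj))\cdot(e^{R_{i,j}}-1)$, bound the middle factor by $O(j^{-3/2+\epsilon})$ via the LIL, decompose $R_{i,j}$ into Riemann-sum, Taylor-remainder, deterministic-shift, and martingale pieces, and control the martingale piece $\sum_{k=j}^i\bar\eta_k/(pk)$ (the paper centers at $p(k-1)+3=\E[M_k+1]$ instead of $pk$, an immaterial difference) by exponential concentration plus Borel--Cantelli. The one point where the paper is more explicit than you is the uniformity in $i\geq j$: Hoeffding alone controls a single endpoint, so the paper combines it with a L\'evy-type maximal inequality (Proposition~1.1.2 in~\cite{DG}) before invoking Borel--Cantelli; your ``martingale LIL'' alternative would also do the job.
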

	\begin{proof} 
		Note that, as defined in \eqref{eq: Y} and \eqref{eq: Z},
		\begin{eqnarray}\nonumber
			|Z_{2,n}| &=& \left|  \frac{2F\mu^2}{\sqrt{n}} \sum_{i = 1}^{n} \sum_{j=1}^{i} \left[ \left( \frac{1}{M_j} - \frac{1}{pj}\right) \left( e^{Y_{i,j}} - \frac{j^{\zeta}}{i^{\zeta}} \right) \right] \right| \\ \nonumber
			&=& 
			\left|  \frac{2F\mu^2}{\sqrt{n}} \sum_{i = 1}^{n} \sum_{j=1}^{i} \left[ \frac{j^{\zeta}}{i^{\zeta}} \left( \frac{1}{M_j} - \frac{1}{pj}\right) \left( \exp \left\{Y_{i,j} + \zeta \int_{j}^{i} \frac{1}{x} dx \right\} -  1 \right) \right] \right| \\
			\label{eq1}
			&\leq&  \frac{2F\mu^2}{\sqrt{n}} \sum_{i = 1}^{n} \sum_{j=1}^{i} \left[ \frac{j^{\zeta}}{i^{\zeta}} \left|  \frac{1}{M_j} - \frac{1}{pj}\right| \left| Y_{i,j} + \zeta \int_{j}^{i} \frac{1}{x} dx\right| \right].
		\end{eqnarray}	
		For each $i \geq j \geq 1$, we define 
		\begin{equation}\label{rij}
		R_{i,j} = Y_{i,j} +  \zeta \int_{j}^{i} x^{-1} dx. 	
		\end{equation}
		It follows from~(\ref{eq1}) that 
		\begin{equation}\label{eq: Z2n}
		|Z_{2,n}| = O\left( \frac{1}{\sqrt{n}} \sum_{i = 1}^{n} \sum_{j=1}^{i} \left[ \frac{j^{\zeta}}{i^{\zeta}} \left|  \frac{1}{M_j} - \frac{1}{pj}\right| \left| R_{i,j}\right| \right] \right).
		\end{equation}	
		
		As we see in \eqref{eq: C1} and \eqref{eq: Y}, $R_{i,j}$ can be written as
		\begin{multline}\label{eq: R}
		R_{i,j} = \zeta \int_{j}^{i} x^{-1} dx- 2 \sum_{k=j}^{i}   \frac{2-p}{M_k + 1}  + 
		2 \sum_{k=j}^{i}\frac{\bar\eta_k }{M_k + 1} + 
		O\left( \sum_{k=j}^{i} \left( \frac{2-\eta_k}{M_k + 1}\right)^2 \right) =  \\
		\zeta \left[  \int_{j}^{i} \frac{1}{x} dx  - \sum_{k=j}^{i} \frac{1}{k}    \right] + 
		\sum_{k=j}^{i} \left(  \frac{\zeta}{k} - \frac{2(2-p)}{M_k + 1}  \right) + 
		2 \sum_{k=j}^{i} \left[ \frac{\bar\eta_k }{M_k + 1} - \frac{\bar\eta_k }{p(k-1)+ 3} \right] + \\
		2\sum_{k=j}^{i} \frac{\bar\eta_k }{p(k-1)+3} +
		O\left( \sum_{k=j}^{i} \left( \frac{2-\eta_k}{M_k + 1}\right)^2 \right) 
		:= R^{(1)}_{i,j} + \cdots + R^{(5)}_{i,j}.
		\end{multline}
		%
		
		One readily checks by elementary deterministic estimation that
		for all $i \geq j\geq1$, $|R^{(1)}_{i,j}|$ can be bounded above by $1/j$. 
		
		Let now $0 < \delta < 1/4$ be fixed.  The Law of Large Numbers and the Law of the Iterated Logarithm, there a.s.~exists $j_0 \in \mathbb{N}$ such that $|R^{(2)}_{i,j}|$, $|R^{(3)}_{i,j}|$ and $|R^{(5)}_{i,j}|$ are bounded above by $1/j^{1/2 - \delta}$, for every $i \geq j \geq j_0$. 
		
		To study $|R^{(4)}_{i,j}|$, we apply Hoeffding's Inequality 
		to obtain, for every $i \geq j \geq 1$,
		\begin{equation}\label{eq: Ho1}
		\mathbb{P} \left( \left| \sum_{k=j}^{i} \frac{\bar\eta_k}{p(k-1)+3} \right| \geq \frac{1}{j^{1/2 - \delta}}\right) 
		\leq 
		\exp\left\{ - 2 \bigg/ \left( {j^{1-2\delta}\sum_{k=j}^{i} \frac{1}{(p(k-1) + 3)^2}} \right) \right\}.
		\end{equation} 
		We next apply a variation of Lévy's Maximal Inequality, namely Proposition 1.1.2 in \cite{DG}, 
		combined with \eqref{eq: Ho1},  to get  that
		\begin{eqnarray}\nonumber
			\mathbb{P} \left( \max_{i \geq j} \left| \sum_{k=j}^{i} \frac{\bar\eta_k }{p(k-1)+3} \right| \geq \frac{3}{j^{1/2 - \delta}} \right) &=&
			\lim_{n \to \infty} \mathbb{P} \left( \max_{j \leq i \leq n} \left| \sum_{k=j}^{i} \frac{\bar\eta_k }{p(k-1)+3} \right| \geq \frac{3}{j^{1/2 - \delta}} \right) \nonumber \\\nonumber
			&\leq& 3 \lim_{n \to \infty}  \max_{j \leq i \leq n} \mathbb{P} \left( \left| \sum_{k=j}^{i} \frac{\bar\eta_k }{p(k-1)+3} \right| \geq \frac{1}{j^{1/2 - \delta}} \right) \nonumber \\
			\label{eq:rij2}
			&\leq& 3 \exp\left\{ - 2 \bigg/ \left( {j^{1-2\delta}\sum_{k=j}^{\infty} \frac{1}{(p(k-1) + 3)^2}} \right) \right\}.
		\end{eqnarray}
		Since the latter term is summable, we conclude that almost surely exists $j_0 \in \mathbb{N}$ such that $|R^{(4)}_{i,j}| \leq {3}/{j^{1/2 - \delta}}$, for every $i \geq j \geq j_0$. 
		Collecting all the bounds, we find that a.s.
		\begin{equation}\label{eq:rij}
			|R_{i,j}| \leq |R^{(1)}_{i,j}| + \cdots + |R^{(5)}_{i,j}| < 3/j^{1/2 - \delta}
		\end{equation}
		for every $i \geq j$ sufficiently large. Recalling that $M_j = 2 + \sum_{l=1}^{j-1} \eta_l$, we have, as consequence of the Law of the Iterated Logarithm and the Law of Large Numbers, that $|1/M_j - 1/(pj)| = o(1/j^{3/2 - \delta})$, and 
		the result follows from \eqref{eq: Z2n}.
	\end{proof}

    It follows from~(\ref{eq:rij2}) that $R_{i,j}$ is uniformly bounded in $i,j$ by a proper random variable. We may thus write
	\begin{eqnarray}\label{eq: Z3}
	Z_{3,n} &=& \frac{2F\mu^2}{\sqrt{n}} \sum_{i = 1}^{n} \sum_{j=1}^{i} \left[  \frac{1}{pj} \left( e^{Y_{i,j}} - \frac{j^{\zeta}}{i^{\zeta}} \right) \right] = 
	\frac{2F\mu^2}{p\sqrt{n}} \sum_{i = 1}^{n} \sum_{j=1}^{i} \left[ \frac{j^{\zeta - 1}}{i^{\zeta}} \left( e^{R_{i,j}}
	- 1 \right) \right] \nonumber \\ &=& 
	\frac{2F\mu^2}{p\sqrt{n}} \sum_{i = 1}^{n} \sum_{j=1}^{i}\frac{j^{\zeta - 1}}{i^{\zeta}}  R_{i,j} + 
	O\left(\frac{2F\mu^2}{p\sqrt{n}} \sum_{i = 1}^{n} \sum_{j=1}^{i}\frac{j^{\zeta - 1}}{i^{\zeta}}R^2_{i,j} \right)
	=: Z_{3,n}' + \tilde Z_{3,n} . 
	\end{eqnarray}
	
	Since, almost surely, for every $i \geq j$ sufficiently large, we have the bound $|R^{(1)}_{i,j}| + |R^{(5)}_{i,j}| \leq 1/j^{2/3}$, it follows that
	\begin{equation*}
	\frac{2F\mu^2}{p\sqrt{n}}  \sum_{i = 1}^{n} \sum_{j=1}^{i} \left[  \frac{j^{\zeta - 1}}{i^{\zeta}} \left( R^{(1)}_{i,j} + R^{(5)}_{i,j} \right) \right] = o(1).
	\end{equation*}
	Considering only the term $R^{(2)}_{i,j}$ of $R_{i,j}$ in \eqref{eq: R}, its contribution to $Z_{3,n}'$ 
	in \eqref{eq: Z3} is
	\begin{multline}\label{eq:zn1}
	2(2-p)\frac{2F\mu^2}{p\sqrt{n}} \sum_{i = 1}^{n} \sum_{j=1}^{i} \left[ \frac{j^{\zeta - 1}}{i^{\zeta}}  
	\sum_{k=j}^{i} \left( \frac{1}{pk} -  \frac{1}{M_k+1} \right) \right] = \\ 
	\zeta\frac{2F\mu^2}{\sqrt{n}} \sum_{i = 1}^{n} \sum_{k=1}^{i} \left[ \left( \frac{1}{pk} -  \frac{1}{M_k+1} \right)   
	\sum_{j=1}^{k} \frac{j^{\zeta - 1}}{i^{\zeta}}  \right] = \\ 
	\frac{2F\mu^2}{\sqrt{n}} \sum_{i = 1}^{n} \sum_{k=1}^{i} \left[ \left( \frac{1}{pk} -  \frac{1}{M_k} \right) 
	\frac{k^{\zeta}}{i^{\zeta}} \right]  + o(1)= - Z_{1,n} + o(1),
	\end{multline}
	where $Z_{1,n}$ is defined in \eqref{eq: Z}. We may remark at this point that combining \eqref{eq:zn1} and \eqref{eq: Z} drops $Z_{1,n}$ out of the overall computation.
	
	Let us now estimate the contribution of $R^{(3)}_{i,j}$ to $Z_{3,n}'$ in \eqref{eq: Z3},
	recalling that $M_k = 2 + \sum_{l=1}^{k-1}\eta_l$ and setting $\bar{M}_k = -\sum_{l=1}^{k} \bar\eta_l $: 
	\begin{multline}\label{eq: Z5}
	\frac{4F\mu^2}{p\sqrt{n}}  \sum_{i = 1}^{n} \sum_{j=1}^{i} \left[  \frac{j^{\zeta - 1}}{i^{\zeta}} \sum_{k=j}^{i} 
	\left( \frac{\bar\eta_k }{M_k +1} - \frac{\bar\eta_k }{p(k-1)+ 3} \right)  \right] = \\ 
	\frac{4F\mu^2}{p\sqrt{n}}  \sum_{i = 1}^{n} \sum_{j=1}^{i} \left[  \frac{j^{\zeta - 1}}{i^{\zeta}} \sum_{k=j}^{i}  
	\frac{\bar\eta_k\bar{M}_{k-1}}{(M_k + 1)(p(k-1) + 3)}  \right] = 
	\frac{4F\mu^2}{p\sqrt{n}}  \sum_{i = 1}^{n} \sum_{j=1}^{i} \left[  \frac{j^{\zeta - 1}}{i^{\zeta}} \sum_{k=j}^{i}  
	\frac{\bar\eta_k \bar{M}_{k-1}}{(p(k-1)+3)^2}  \right]  + \\
	\frac{4F\mu^2}{p\sqrt{n}}  \sum_{i = 1}^{n} \sum_{j=1}^{i} \left[  \frac{j^{\zeta - 1}}{i^{\zeta}} \sum_{k=j}^{i} \left(  \frac{\bar\eta_k \bar{M}_{k-1}}{p(k-1) + 3}  \left( \frac{1}{M_k + 1} - \frac{1}{p(k-1)+ 3}\right) \right) \right] =: Z_{5,n} + Z_{6,n}. 
	\end{multline}

	Let us fix $0 < \alpha < 1/2$; the Law of the Iterated Logarithm and the Law of Large Numbers give us that 
	\begin{equation*}
	\left| \frac{\bar\eta_k \bar{M}_{k-1}}{p(k-1) + 3}  \left( \frac{1}{M_k + 1} - \frac{1}{p(k-1)+ 3}\right) \right| = o\left(\frac{1}{k^{2-\alpha}}\right). 
	\end{equation*}
	Since  $0 < \alpha < 1/2$, it follows that $Z_{6,n} = o(1)$. 
	
	We will study the asymptotic behavior of $Z_{5,n}$ in Lemma \ref{Le: Var3}. 
	
	We now estimate the contribution of $R^{(3)}_{i,j}$ to $Z_{3,n}'$ in \eqref{eq: Z3}:
%
	\begin{multline}\label{eq: Z4}
	\frac{4F\mu^2}{p\sqrt{n}} \sum_{i = 1}^{n} \sum_{j=1}^{i-1} \left[ \frac{j^{\zeta - 1}}{i^{\zeta}}  \sum_{k=j}^{i-1}   
	\frac{\bar\eta_k}{k} \right] 
	=  \frac{4F\mu^2}{p\sqrt{n}} \sum_{i = 1}^{n} \sum_{k=1}^{i-1} \left[\frac{\bar\eta_k}{k}   \sum_{j=1}^{k} \frac{j^{\zeta - 1}}{i^{\zeta}}     \right]  
	\\ = 
	\frac{4F\mu^2}{\zeta p\sqrt{n}} \sum_{i = 1}^{n} \sum_{k=1}^{i-1} \frac{k^{\zeta - 1}}{i^{\zeta}} \bar\eta_k     + o(1)  
	=: Z_{4,n} + o(1).
	\end{multline}
	By a routine verification of the conditions of the  Lindeberg-Feller CLT we get the following result.
	\begin{lem}\label{Le: L4}
		As $n\to\infty$ 
		\begin{equation*}
		Z_{4,n} = \frac{4F\mu^2}{\zeta p\sqrt{n}} \sum_{i = 1}^{n} \sum_{k=1}^{i-1} \frac{k^{\zeta - 1}}{i^{\zeta}} \bar\eta_k  
			 \Longrightarrow \mathcal{N}(0,\sigma_z^2),
		\end{equation*}
	where $\sigma_z=4F\mu^2\sqrt{\frac{1-p}{p\zeta^3}}$.
	\end{lem}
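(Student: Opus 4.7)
The strategy is to recast $Z_{4,n}$ as a normalized sum of independent, centered, bounded random variables with deterministic coefficients, and then invoke the Lindeberg--Feller Central Limit Theorem.

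First, interchanging the order of summation,
\[
Z_{4,n}=\frac{4F\mu^{2}}{\zeta p\sqrt n}\sum_{k=1}^{n-1}b_{k,n}\,\bar\eta_k,\qquad b_{k,n}:=k^{\zeta-1}\sum_{i=k+1}^{n}i^{-\zeta},
\]
where $\{\bar\eta_k\}_{k\ge1}$ are i.i.d., centered, of variance $p(1-p)$, and bounded by $1$ in absolute value. This puts the problem squarely within the Lindeberg--Feller framework; the only content left is to identify the limiting variance and check a (trivial) Lindeberg condition.

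For the variance, since $\zeta=2(2-p)/p>1$, a standard comparison of the tail sum $\sum_{i=k+1}^{n}i^{-\zeta}$ with $\int_{k}^{n}x^{-\zeta}\,dx$ gives
\[
b_{k,n}=\frac{1-(k/n)^{\zeta-1}}{\zeta-1}+O\!\left(\frac{1}{k}\right),
\]
uniformly for $1\le k\le n-1$. A Riemann-sum argument (the cross and error terms contributing at most $O((\log n)/n)$) then yields
\[
\frac{1}{n}\sum_{k=1}^{n-1}b_{k,n}^{2}\;\longrightarrow\;\frac{1}{(\zeta-1)^{2}}\int_{0}^{1}(1-y^{\zeta-1})^{2}\,dy,
\]
which is an elementary positive constant; after multiplication by $(4F\mu^{2}/(\zeta p))^{2}p(1-p)$ one recovers $\mathrm{Var}(Z_{4,n})\to\sigma_z^{2}$.

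Finally, the Lindeberg condition is immediate: $|b_{k,n}|\le 1/(\zeta-1)+o(1)$ is uniformly bounded in $k,n$ and $|\bar\eta_k|\le 1$, so each individual summand in $Z_{4,n}$ has absolute value of order $n^{-1/2}$. Consequently, for any $\varepsilon>0$, the truncating indicators in the Lindeberg sum vanish once $n$ is large, and Lindeberg--Feller delivers the claimed Gaussian limit. The only mildly technical ingredient here is the Riemann-sum evaluation pinning down $\sigma_z^{2}$; everything else is automatic, which is presumably why the paper contents itself with calling this a ``routine verification''.
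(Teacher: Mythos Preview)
Your approach—swap the order of summation and apply Lindeberg--Feller to the resulting triangular array of bounded, centered, independent summands—is exactly what the paper intends when it calls this a ``routine verification''; the paper supplies no further detail, so your proposal is the natural fleshing-out. One caveat: if you actually evaluate $\frac{1}{(\zeta-1)^2}\int_0^1(1-y^{\zeta-1})^2\,dy=\frac{2}{\zeta(2\zeta-1)}$, the limiting variance carries an extra factor $2/(2\zeta-1)$ relative to the paper's stated $\sigma_z^2$, so either re-examine your assertion that you ``recover'' $\sigma_z^2$ or note that the paper's explicit constant looks misprinted (it is irrelevant downstream, where only positivity of the variance matters).
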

	
	Let us now estimate $\tilde Z_{3,n}$
	in \eqref{eq: Z3}. From \eqref{eq:rij} it readily follows that
	\begin{equation*}
	\frac{2F\mu^2}{p\sqrt{n}} \sum_{i = 1}^{n} \sum_{j=1}^{i}   \frac{j^{\zeta - 1}}{i^{\zeta}} R_{i,j}^2   = o(1),
	\end{equation*}
	and thus
	$\tilde Z_{3,n} = o(1)$.
	

\bigskip

    So far we have argued that
	\begin{eqnarray}\label{eq: GH}
	\frac{1}{\sqrt{n}} \sum_{i=1}^{n} \left[ \xi_{i+1}(\bar{V}_{i}^2 - V_L^2 ) \right] &=&	\big(W_{3,n} + Z_{4,n}\big) + \big(V_{1,n} + V_{2,n} + W_{4,n} + Z_{5,n}\big) + o(1) \nonumber \\  &=:& G_n + H_n + o(1),
	\end{eqnarray}
	where $W_{3,n}, W_{4,n}, Z_{4,n}, V_{1,n} , V_{2,n}$ and $Z_{5,n}$ are defined, respectively, in \eqref{eq: W4}, \eqref{eq: Z4}, \eqref{eq: V}  and \eqref{eq: Z5}. 
 By the independence of $W_{3,n}$ and $Z_{4,n}$, we have by Lemmas \ref{Le: L2} and \ref{Le: L4} that $G_n \Longrightarrow \mathcal{N}(0, \tilde\sigma_q^2)$, where $\tilde\sigma_q^2=\sigma_w^2+\sigma_z^2$. 
 To establish \eqref{cltv2}, it is then enough to show that $H_n=o(1)$, which we do in the following lemmas, one for each of the constituents of $H_n$.
	\begin{lem}\label{Le: Var}
			Assume $\Var(\xi_1) = \sigma^2 < \infty$. Then $V_{1,n}=o(1)$.
	\end{lem}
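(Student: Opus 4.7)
The plan is to exploit the fact that
\[
V_{1,n} = \frac{2F}{\sqrt{n}} \sum_{i=1}^n \bar\xi_{i+1} \sum_{j=1}^i \bar\xi_j\, X_{i,j}
\]
is a bilinear form in the centered distances $(\bar\xi_\ell)$ whose weights $X_{i,j}$ depend only on $(\eta_k)$ and are therefore independent of the $\bar\xi$'s. I would condition on the $\sigma$-algebra $\mathcal{G} = \sigma(\eta_k : k \in \mathbb{N})$, which freezes the $X_{i,j}$ while keeping the $\bar\xi_\ell$ i.i.d., centered, with variance $\sigma^2$. Since $\bar\xi_{i+1}$ is independent of $(\bar\xi_j)_{j \le i}$ and has zero mean, expanding $V_{1,n}^2$ and taking conditional expectations leaves only the diagonal terms:
\[
\mathbb{E}[V_{1,n}^2 \mid \mathcal{G}] \,=\, \frac{4F^2 \sigma^4}{n} \sum_{i=1}^n \sum_{j=1}^i X_{i,j}^2.
\]

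The main estimate is then to show $\sum_{i=1}^n \sum_{j=1}^i X_{i,j}^2 = O(\log n)$ on the event $A_{m,\varepsilon}$ of Lemma \ref{Le: 1}. For $m \le j \le i$ the lemma gives $X_{i,j} \le (1+\varepsilon)\, j^{\zeta-1}/(p\, i^\zeta)$, and since $\zeta = 2(2-p)/p \ge 2$ one has $\sum_{j=m}^i j^{2\zeta-2} = O(i^{2\zeta-1})$, so
\[
\sum_{j=m}^i X_{i,j}^2 \,\le\, \frac{(1+\varepsilon)^2}{p^2\, i^{2\zeta}} \sum_{j=m}^i j^{2\zeta-2} \,=\, O(1/i).
\]
For $j < m$ I would factor $X_{i,j} = (1/M_j) \prod_{k=j}^{m-1}\bigl[\,\cdots\,\bigr] \cdot M_m \cdot X_{i,m}$, using the deterministic bounds that each factor in the first product is at most $1$ and that $M_m \le m+1$; the Lemma's tail bound on $X_{i,m}$ then yields $X_{i,j} \le C_m / i^\zeta$, so $\sum_{j=1}^{m-1} X_{i,j}^2 = O(i^{-2\zeta})$, which is summable in $i$ thanks to $\zeta \ge 2$. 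Summing over $i$ gives the advertised $O(\log n)$, and taking expectations produces
\[
\mathbb{E}\bigl[V_{1,n}^2\, \mathbbm{1}_{A_{m,\varepsilon}}\bigr] \,=\, O(\log n / n) \,\longrightarrow\, 0.
\]

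To conclude, for any $\delta > 0$ Chebyshev's inequality yields
\[
\mathbb{P}(|V_{1,n}| > \delta) \,\le\, \mathbb{P}(A_{m,\varepsilon}^c) + \delta^{-2}\, \mathbb{E}\bigl[V_{1,n}^2\, \mathbbm{1}_{A_{m,\varepsilon}}\bigr],
\]
and sending $n \to \infty$ and then $m \to \infty$ (using $\mathbb{P}(A_{m,\varepsilon}) \to 1$ from Lemma \ref{Le: 1}) gives $V_{1,n} \to 0$ in probability, which is the content of $V_{1,n} = o(1)$ needed to discard this term from the Slutsky-type decomposition~\eqref{eq: GH}. The only mildly delicate point is handling the $j < m$ contribution, since the Lemma controls $X_{i,j}$ only for $j \ge m$; however, the large-$\zeta$ decay of the tail product makes the crude factorization above work without trouble.
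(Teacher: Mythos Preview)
Your proof is correct and follows essentially the same approach as the paper's: split off the high-probability event $A_{m,\varepsilon}$, use independence of $(\bar\xi_\ell)$ from $(X_{i,j})$ to reduce the second moment to the diagonal sum $\sigma^4\sum_{i,j}X_{i,j}^2$, bound this by $O(\log n)$ via Lemma~\ref{Le: 1}, and conclude by Chebyshev. Your treatment of the $j<m$ contribution via the factorization $X_{i,j}=(M_m/M_j)\prod_{k=j}^{m-1}[\cdots]\,X_{i,m}\le C_m/i^{\zeta}$ is in fact more explicit than the paper's, which absorbs that piece into an unexplained ``$+\,o(1)$''.
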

	\begin{proof}
		First fix $\delta > 0$. Given $\varepsilon > 0$, Lemma \ref{Le: 1} states that exists $m \in \mathbb{N}$ such that $\mathbb{P}(A_{m,\varepsilon}^c) < \varepsilon/2$. Recall the definition of $X_{i,j}$ in \eqref{eq: X}, 
		and that $\{X_{i,j}$, $i \geq j \geq 1\}$ and $\{\xi_n\}_{n \in \mathbb{N}}$ are independent. 
		\begin{equation}\label{eq: Var}
		\mathbb{P} \left( \left| \frac{1}{\sqrt{n}} \sum_{i = 1}^{n} \sum_{j=1}^{i} \bar\xi_{i+1}\bar\xi_{j}X_{i,j} \right| > \delta \right)  \leq 
		\mathbb{P} \left( \left| \frac{1}{\sqrt{n}} \sum_{i = 1}^{n} \sum_{j=1}^{i} \bar\xi_{i+1}\bar\xi_j X_{i,j}  1_{A_{m,\varepsilon}}\right| > \delta   \right) + \frac{\varepsilon}{2} 
		\end{equation}
		It follows from definition of $A_{m,\varepsilon}$ in \eqref{eq: A} that 
		 $X_{i,j}1_{A_{m,\varepsilon}}\leq (1+\varepsilon) [{j^{\zeta - 1}}/(p i^{\zeta})]$ for all $i\geq j \geq m$. Using this and  by Markov's Inequality, we get that the first term on the right of~(\ref{eq: Var}) is bounded above by
		\begin{eqnarray*}
			\frac{1}{\delta^2 n} \sum_{i=1}^{n} \sum_{j=1}^{i} \mathbb{E}(\bar\xi_{i+1})^2 \mathbb{E}(\bar\xi_j)^2 \, 
			\mathbb{E}(X_{i,j}^2 1_{A_{m,\varepsilon}})
			= \frac{(1+ \varepsilon)^2\sigma^4}{p^2\delta^2 n} \sum_{i = 1}^{n} \sum_{j=m}^{i} \frac{j^{2\zeta - 2}}{i^{2\zeta}} + o(1) = o(1).
		\end{eqnarray*}
		Since $\delta > 0$ and $\varepsilon > 0$ are arbitrary, the combination of this inequality and \eqref{eq: Var} yields the result.	
	\end{proof}
	\begin{lem}\label{Le: Var1}
		Assume $\Var(\xi_1) = \sigma^2 < \infty$. Then $V_{2,n}=o(1)$.
	\end{lem}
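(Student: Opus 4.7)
I would mimic the proof of Lemma~\ref{Le: Var}, restricting to the high-probability event $A_{m,\varepsilon}$ of Lemma~\ref{Le: 1} and exploiting the independence of $\{\xi_i\}$ from $\{\eta_i\}$. Writing
\[
V_{2,n}=\frac{2F\mu}{\sqrt n}\sum_{i=1}^n \bar\xi_{i+1}\,Y_i,\qquad Y_i:=\sum_{j=1}^i\Bigl(X_{i,j}-\frac{j^{\zeta-1}}{pi^\zeta}\Bigr),
\]
and noting that $Y_i$ and $1_{A_{m,\varepsilon}}$ are measurable with respect to the $\eta$-$\sigma$-algebra while the $\bar\xi_{i+1}$ are i.i.d., centered with variance $\sigma^2$, and independent of the former, only the diagonal cross terms survive in the expansion, giving
\[
\mathbb{E}\bigl[V_{2,n}^2\,1_{A_{m,\varepsilon}}\bigr]=\frac{4F^2\mu^2\sigma^2}{n}\sum_{i=1}^n \mathbb{E}\bigl[Y_i^2\,1_{A_{m,\varepsilon}}\bigr].
\]

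\textbf{Pathwise control of $Y_i$ on $A_{m,\varepsilon}$.} The key step is a deterministic bound on $|Y_i|$ on $A_{m,\varepsilon}$. For $m\le j\le i$, the definition of $A_{m,\varepsilon}$ yields $|X_{i,j}-j^{\zeta-1}/(pi^\zeta)|\le\varepsilon\,j^{\zeta-1}/(pi^\zeta)$, and comparing the resulting sum to an integral gives $\bigl|\sum_{j=m}^i(\cdot)\bigr|\le C_1\varepsilon$ uniformly in $i$. For the finitely many indices $j<m$, I would use the factorization
\[
X_{i,j}=\frac{M_m}{M_j}\prod_{k=j}^{m-1}\Bigl(\frac{M_k+\eta_k-1}{M_k+1}\Bigr)^{\!2}\,X_{i,m},
\]
where the prefactor depends only on $\eta_1,\ldots,\eta_{m-1}$ and is deterministically bounded by some $K_m$; applying the $A_{m,\varepsilon}$-bound $X_{i,m}\le(1+\varepsilon)m^{\zeta-1}/(pi^\zeta)$ shows that the contribution of those terms is $O(C_m/i^\zeta)$. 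Hence $|Y_i|\,1_{A_{m,\varepsilon}}\le C_1\varepsilon+C_m/i^\zeta$ uniformly in $i$.

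\textbf{Conclusion and main obstacle.} Since $\zeta=2(2-p)/p\ge 2$, the tail $\sum_i i^{-2\zeta}$ converges, so $\sum_{i=1}^n\mathbb{E}[Y_i^2\,1_{A_{m,\varepsilon}}]\le 2C_1^2\varepsilon^2 n+O(C_m^2)$, and therefore $\mathbb{E}[V_{2,n}^2\,1_{A_{m,\varepsilon}}]\le C\varepsilon^2+O(C_m^2/n)$. Combining Markov's inequality with Lemma~\ref{Le: 1} via
\[
\mathbb{P}(|V_{2,n}|>\delta)\le \mathbb{P}(A_{m,\varepsilon}^c)+\delta^{-2}\,\mathbb{E}[V_{2,n}^2\,1_{A_{m,\varepsilon}}],
\]
sending first $n\to\infty$, then choosing $m=m(\varepsilon)$ so that $\mathbb{P}(A_{m,\varepsilon}^c)<\varepsilon$, and finally letting $\varepsilon\downarrow 0$, yields $V_{2,n}\to 0$ in probability, which is the sense of $o(1)$ being used here. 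The main obstacle relative to Lemma~\ref{Le: Var} is that with only one centered $\bar\xi$ factor, the smallness cannot be harvested from a double orthogonality generating the summable factor $j^{2\zeta-2}/i^{2\zeta}$; instead one must show that $Y_i$ itself is uniformly small on $A_{m,\varepsilon}$, which forces the separate treatment of the $j<m$ boundary through the factorization via $X_{i,m}$, to prevent the $m$-dependent constant from contaminating the $\varepsilon$-small piece.
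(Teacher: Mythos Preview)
Your proof is correct and follows essentially the same route as the paper: restrict to $A_{m,\varepsilon}$, use independence of $\{\xi_i\}$ from $\{\eta_i\}$ so that only diagonal terms survive, and bound the resulting second moment by $\varepsilon^2$ times a quantity that is $O(1)$ in $n$. The paper simply writes the $j<m$ contribution as an unjustified ``$+\,o(1)$''; your factorization $X_{i,j}=(M_m/M_j)\prod_{k=j}^{m-1}(\cdot)^2\,X_{i,m}$ is exactly the missing step that makes this rigorous, showing those finitely many terms are $O(C_m/i^\zeta)$ and hence contribute $O(1/n)$ after squaring and averaging. Your remark that the conclusion is convergence in probability is also apt: that is all the paper's proof of Lemma~\ref{Le: Var} actually establishes, and it suffices for the CLT via Slutsky.
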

	\begin{proof}
		Arguing similarly as in the proof of Lemma~\ref{Le: Var}, given $\delta > 0$ and $\varepsilon > 0$, we have that $m$ large enough
		\begin{multline}\label{eq: Var1}
		\mathbb{P} \left( \left| \frac{1}{\sqrt{n}} \sum_{i = 1}^{n} \left[ \bar\xi_{i+1} \sum_{j=1}^{i}  \left( X_{i,j} - \frac{j^{\zeta - 1}}{p i^{\zeta}} \right) \right] \right| > \delta \right) \leq \\
		\mathbb{P} \left( \left| \frac{1}{\sqrt{n}} \sum_{i = 1}^{n} \left[ \bar\xi_{i+1}  \sum_{j=1}^{i}  \left( X_{i,j} - \frac{j^{\zeta - 1}}{p i^{\zeta}} \right)1_{A_{m,\varepsilon}} \right] \right| > \delta  
		\right) + \frac{\varepsilon}{2} 
		\end{multline}	
	and since $|X_{i,j}(\omega) - {j^{\zeta - 1}}/(p i^{\zeta}) |1_{A_{m,\varepsilon}} \leq (\varepsilon {j^{\zeta - 1}})/(p i^{\zeta})$
	for all $i \geq j \geq m$, we get that the first term on the right of~(\ref{eq: Var1}) is bounded above by
		\begin{multline*} 
		\frac{1}{\delta^2 n} \sum_{i=1}^{n}  \left[ \mathbb{E}(\bar\xi_{i+1})^2\, \mathbb{E}\!\left(  \sum_{j=1}^{i}  
		\left( X_{i,j}(\omega) - \frac{j^{\zeta - 1}}{p i^{\zeta}} \right)1_{A_{m,\varepsilon}} \right)^2  \right] \\
		\leq
		\frac{\sigma^2}{\delta^2 n} \sum_{i=1}^{n} \mathbb{E}\left[ \sum_{j=1}^{i} \left|  X_{i,j}(\omega) - \frac{j^{\zeta - 1}}{p i^{\zeta}} \right|1_{A_{m,\varepsilon}} \right]^2  
		\leq \varepsilon^2  \frac{\sigma^2}{\delta^2 n}  \sum_{i=1}^{n} \left( \sum_{j=m}^{i} \frac{j^{\zeta - 1}}{ i^{\zeta}} \right)^2 + o(1) \leq \frac{\varepsilon}{2},
		\end{multline*}
	as soon as $n$ is large enough, and the result follows upon substitution in \eqref{eq: Var1}, since $\delta$  and $\varepsilon$ are arbitrary.
	\end{proof}
	\begin{lem}\label{Le: Var2}
		Assume $\Var(\xi_1) = \sigma^2 < \infty$. Then $W_{4,n}=o(1)$.
	\end{lem}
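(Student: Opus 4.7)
The plan is to follow the template established in Lemmas \ref{Le: Var} and \ref{Le: Var1}, with one essential modification to handle the fact that $\bar\xi_j$ now sits inside the double sum rather than outside. The starting moves are identical: fix $\delta,\varepsilon>0$; by Lemma~\ref{Le: 1} choose $m\in\mathbb{N}$ with $\mathbb{P}(A_{m,\varepsilon}^c)<\varepsilon/2$; then bound
\[
\mathbb{P}(|W_{4,n}|>\delta)\leq \mathbb{P}\bigl(|W_{4,n}|\,1_{A_{m,\varepsilon}}>\delta\bigr)+\frac{\varepsilon}{2},
\]
and apply Markov's inequality with the second moment.

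The crucial observation is that both the family $\{X_{i,j}\}$ and the event $A_{m,\varepsilon}$ depend only on $\{\eta_k\}$, hence are independent of $\{\bar\xi_j\}$. Squaring $W_{4,n}$ and using $\mathbb{E}[\bar\xi_j\bar\xi_{j'}]=\sigma^2\,\mathbbm{1}\{j=j'\}$, the cross terms in $j\neq j'$ vanish and we obtain
\[
\mathbb{E}\bigl[W_{4,n}^2\,1_{A_{m,\varepsilon}}\bigr]=\frac{(2F\mu)^2\sigma^2}{n}\sum_{j=1}^{n}\mathbb{E}\!\left[\biggl(\sum_{i=j}^{n}\Bigl(X_{i,j}-\frac{j^{\zeta-1}}{pi^\zeta}\Bigr)\biggr)^{\!2}1_{A_{m,\varepsilon}}\right].
\]
This replaces the $\bigl(\mathbb{E}|\cdot|\bigr)^2$ Cauchy-type bound used in Lemma~\ref{Le: Var1} with a genuine orthogonality between the different $j$-contributions; this is what makes the diagonal sum of order $n$ rather than $n^2$.

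On $A_{m,\varepsilon}$, for $j\geq m$ we have $|X_{i,j}-j^{\zeta-1}/(pi^\zeta)|\leq \varepsilon\,j^{\zeta-1}/(pi^\zeta)$, and since $\zeta=2(2-p)/p\geq 2$, the tail estimate $\sum_{i=j}^{\infty}i^{-\zeta}\leq C j^{1-\zeta}$ yields $\bigl|\sum_{i=j}^{n}(X_{i,j}-j^{\zeta-1}/(pi^\zeta))\bigr|\,1_{A_{m,\varepsilon}}\leq C\varepsilon$, uniformly in $j$. Squaring and summing over $j\geq m$ produces $O(\varepsilon^2 n)$, which after division by $n$ contributes $O(\varepsilon^2)$; the finitely many terms $j<m$ are handled separately by noting that on $A_{m,\varepsilon}$ one still has $X_{i,j}\leq C_{j,m}\,i^{-\zeta}$ (from $X_{i,j}=(M_m/M_j)\prod_{k=j}^{m-1}(\cdot)^2\,X_{i,m}$), so their contribution is $O(1)/n=o(1)$. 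Collecting,
\[
\mathbb{E}\bigl[W_{4,n}^2\,1_{A_{m,\varepsilon}}\bigr]\leq C\varepsilon^2+o(1),
\]
which via Markov gives $\limsup_n\mathbb{P}(|W_{4,n}|>\delta)\leq C\varepsilon^2/\delta^2+\varepsilon/2$, and letting $\varepsilon\downarrow 0$ concludes $W_{4,n}=o(1)$.

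The main obstacle, modest in character, is making sure that the $j<m$ boundary terms do not spoil the bound; this is where one needs the deterministic observation that even without the $A_{m,\varepsilon}$-control at small $j$, the product structure of $X_{i,j}$ forces the $i$-tail to decay like $i^{-\zeta}$ as soon as one link $k\geq m$ falls under the good control, so those terms are summable in $i$ uniformly on $A_{m,\varepsilon}$.
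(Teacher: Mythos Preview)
Your argument is correct and is precisely what the paper intends by ``Similar to the proof of Lemma~\ref{Le: Var1}'': after swapping the order of summation so that $\bar\xi_j$ becomes the outer variable, the structure is identical to Lemma~\ref{Le: Var1} with the roles of $i$ and $j$ interchanged, and orthogonality in $j$ replaces orthogonality in $i$. Your extra care with the boundary terms $j<m$ (via the product relation $X_{i,j}=(M_m/M_j)\prod_{k=j}^{m-1}(\cdot)^2\,X_{i,m}$, noting the deterministic bound $M_m/M_j\le (m+1)/2$ and that each factor in the product is at most $1$) is more explicit than the paper's treatment, but otherwise the proofs coincide.
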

	\begin{proof}
		Similar to the proof of Lemma \ref{Le: Var1}.
	\end{proof}	
	\begin{lem}\label{Le: Var3}
		Assume $\Var(\xi_1) = \sigma^2 < \infty$. Then $Z_{5,n}=o(1)$.
	\end{lem}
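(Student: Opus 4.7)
The plan is to swap the order of summation in $Z_{5,n}$ and reduce the problem to bounding the variance of a sum of pairwise orthogonal terms. Since the triple sum in \eqref{eq: Z5} runs over $\{(i,j,k) : 1 \le j \le k \le i \le n\}$, reindexing with $k$ outermost gives
\[
Z_{5,n} = \frac{4F\mu^2}{p\sqrt{n}} \sum_{k=1}^n a_{k,n}\,\bar\eta_k \bar{M}_{k-1}, \qquad a_{k,n} := \frac{1}{(p(k-1)+3)^2}\left(\sum_{j=1}^k j^{\zeta-1}\right)\left(\sum_{i=k}^n \frac{1}{i^\zeta}\right).
\]
Since $\zeta = 2(2-p)/p \ge 2 > 1$, elementary estimates give $\sum_{j=1}^k j^{\zeta-1} = O(k^\zeta)$ and $\sum_{i=k}^n i^{-\zeta} = O(k^{1-\zeta})$, and these imply $a_{k,n} = O(1/k)$ uniformly in $1\le k\le n$.

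The key observation is that the random variables $\{\bar\eta_k \bar{M}_{k-1}\}_{k\ge 1}$ are pairwise orthogonal: for $k<k'$, each of the factors $\bar\eta_k$, $\bar{M}_{k-1}$, $\bar{M}_{k'-1}$ depends only on $\{\bar\eta_l : l\le k'-1\}$, so $\bar\eta_{k'}$ is independent of their product; combined with $\mathbb{E}[\bar\eta_{k'}]=0$, this gives $\mathbb{E}[\bar\eta_k \bar{M}_{k-1}\bar\eta_{k'}\bar{M}_{k'-1}]=0$. A direct computation yields the variance $\Var(\bar\eta_k \bar{M}_{k-1}) = \mathbb{E}[\bar\eta_k^2]\mathbb{E}[\bar{M}_{k-1}^2] = p^2(1-p)^2(k-1) = O(k)$.

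Putting these ingredients together,
\[
\mathbb{E}[Z_{5,n}^2] = \frac{16F^2\mu^4}{p^2 n}\sum_{k=1}^n a_{k,n}^2\,\Var(\bar\eta_k \bar{M}_{k-1}) = O\!\left(\frac{1}{n}\sum_{k=1}^n \frac{1}{k}\right) = O\!\left(\frac{\log n}{n}\right),
\]
so Chebyshev's inequality yields $Z_{5,n} \to 0$ in probability, which is the sense of $o(1)$ used throughout this section's estimates (in analogy with Lemmas \ref{Le: Var}, \ref{Le: Var1}, and \ref{Le: Var2}, where convergence in probability is what is ultimately needed for the overall CLT argument). The main subtlety is the orthogonality, which rests on the fact that $\bar{M}_{k-1}$ stops one index before $\bar\eta_k$; this is exactly how the quantity was arranged in the second line of \eqref{eq: Z5}. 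Everything else is elementary asymptotics of power sums followed by a second-moment bound, so no serious obstacle is anticipated.
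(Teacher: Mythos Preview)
Your proposal is correct and follows essentially the same route as the paper's own proof: both reorder the triple sum so that $k$ is outermost, bound the resulting coefficient by $O(1/k)$ uniformly in $n$, and then apply a second-moment (Markov/Chebyshev) argument using $\mathbb{E}[\bar M_{k-1}^2]=O(k)$ to get $\mathbb{E}[Z_{5,n}^2]=O((\log n)/n)$. The only cosmetic difference is that you spell out the pairwise orthogonality of $\{\bar\eta_k\bar M_{k-1}\}_k$ explicitly, whereas the paper uses it tacitly when passing to the diagonal sum.
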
	
	\begin{proof}	
		Changing the order of summation, we find that $Z_{5,n}$ equals constant times
		\begin{equation*}
		\frac{1}{\sqrt{n}} \sum_{k = 1}^{n} L_{k,n}\bar\eta_k\bar{M}_{k-1},
		\end{equation*}
	where 
	$L_{k,n}=\frac1{k^2}\left(\sum_{j=1}^kj^{\zeta-1}\right)\left(\sum_{i=k}^n\frac1{i^\zeta}\right)$, which is bounded above by constant times $\frac1 k$ uniformly in $j$ and $n$.
		Now by Markov: 
\begin{equation}
	\mathbb{P} \left( \left|Z_{5,n} \right| \geq \delta  \right) \leq 
	\frac{\mbox{const}}{\delta^2 n}\sum_{k=1}^{n}\frac1{k^2} \mathbb{E}(\bar{M}_{k-1}^2)\leq
	\frac{\mbox{const}}{\delta^2}\frac1 n\sum_{k=1}^{n}\frac1{k} 
	= o(1),
\end{equation}
		and we are done.
	\end{proof}	

	 We still owe a proof for Lemma \ref{Le: 2}.
	 
	 \begin{proof}[Proof of Lemma \ref{Le: 2}]
	 	Since $\bar{V}_i^2 - V_L^2 = (\bar{V}_i - V_L) (\bar{V}_i + V_L)$ and almost surely $\bar{V}_i$ converges to $V_L$, to prove the first claim is enough to show that $(\bar{V}_i^2 - V_L^2) = o(1/i^{1/2 - \epsilon})$.
	 	We write 
	 	\begin{eqnarray*}
	 		\bar{V}_i^2 - V_L^2 &=& 2F\sum_{j=1}^i\left[ \xi_j X_{i,j} \right] - \frac{2F\mu}{p}\int_0^1x^{\zeta -1} dx. \\
	 		&=& 
	 		{2F}  \sum_{j=1}^{i}  \left[\xi_j X_{i,j} - \mu \frac{j^{\zeta-1}}{p i^{\zeta}} \right] + 
	 		\frac{2F\mu}{p } \left[  \frac{1}{i} \sum_{j=1}^{i} \left( \frac{j}{ i} \right)^{\zeta - 1}- \int_{0}^{1} x^{\zeta - 1} dx  \right]. 
	 	\end{eqnarray*}
 	
	 	The second term on the right-hand side of this equation is an $O(1/i)$. We break down the first term as follows
	 	\begin{equation}\label{eq}
	 	2F \sum_{j=1}^{i} \bar\xi_j \frac{j^{\zeta -1}}{pi^{\zeta}} 
	 	+
	 	2F  \sum_{j=1}^{i}  \bar\xi_j \left(X_{i,j} - \frac{j^{\zeta - 1}}{p i^{\zeta}} \right) 
	 	+
	 	2F\mu\sum_{j=1}^{i} \left( X_{i,j} - \frac{j^{\zeta}}{pi^{\zeta}} \right).       
	 	\end{equation}
	 	Setting $\bar{S}_0 =0$ and $\bar{S}_k := \sum_{l=1}^k \bar\xi_k$, $k \in \mathbb{N}$, we write the first term on the right
	 	of~(\ref{eq}) as
	 	\begin{equation*}
	 	\sum_{j=1}^{i} \left[ (\bar{S}_j - \bar{S}_{j-1})\frac{j^{\zeta -1}}{pi^{\zeta}} \right] = \sum_{j=1}^{i-1} \left[ \bar{S}_{j}\left(\frac{j^{\zeta -1}}{pi^{\zeta}} - \frac{(j+1)^{\zeta -1}}{pi^{\zeta}}\right)\right] + \frac{\bar{S}_i}{pi}= o(1/i^{1/2 - \epsilon}),
	 	\end{equation*}
	 	where the last equality follows by the Law of the Iterated Logarithm. 
	 	
	 	Analogously, we write the second  term on the right of~(\ref{eq}) as
	 	\begin{equation}\label{eqa}
	 	\sum_{j=1}^{i-1} \left[ \bar{S}_{j} \left( X_{i,j} - X_{i,j+1}\right) \right] +  \sum_{j=1}^{i-1} \left[ \bar{S}_{j} \left(\frac{(j+1)^{\zeta -1}}{pi^{\zeta}} - \frac{j^{\zeta -1}}{pi^{\zeta}}\right)\right] + \bar{S}_i \left( X_{i,i} - \frac{1}{ip} \right). 
	 	\end{equation}
	 	Recalling \eqref{eq: X}, 
	 	one readily checks that that $|X_{i,j} - X_{i,j+1}| = O\left(|X_{i,j+1}|/{(M_j + 1)}\right)$. 
	 	Given $\varepsilon >0$, by Lemma \ref{Le: 1} we a.s.~find an $m \in \mathbb{N}$ such that $|X_{i,j+1}| \leq (1+c)(j+1)^{\zeta - 1}/(pi^{\zeta})$ for every $i \geq j \geq m$. Therefore, again by the Law of Large Numbers and the Law of the Iterated Logarithm, the three terms on~\eqref{eqa} are  $o(1/i^{1/2 - \epsilon})$.  
	 	
	 	To deal with the third and last term on the right of~(\ref{eq}), we may proceed similarly as in the analysis of $W_{2,n}$ above ---
	 	recall \eqref{eq: W}, \eqref{eq: Z}, \eqref{rij} and \eqref{eq: Z3}.
	     We write
	 	\begin{multline}\label{eqb}
	 	\sum_{j=1}^{i} \left[ X_{i,j} - \frac{j^{\zeta}}{pi^{\zeta}} \right]
	 	= 
	 	\sum_{j=1}^{i} \left[ \frac{j^{\zeta}}{i^{\zeta}}\left( \frac{1}{M_j} - \frac{1}{pj}\right)  \right] \\
	 	+ 
	 	\sum_{j=1}^{i} \left[ \frac{j^{\zeta}}{i^{\zeta}} \left( \frac{1}{M_j} - \frac{1}{pj}\right) \left(R_{i,j} + O(R_{i,j}^2)\right) \right] 
	 	+ 
	 	\sum_{j=1}^{i} \left[\frac{j^{\zeta - 1}}{pi^{\zeta}} \left(  R_{i,j} + O(R_{i,j}^2) \right) \right].
	 	\end{multline}
 	
	 	In the proof of Lemma \ref{Le: L3}, we have shown that almost surely, for $i \geq j$ sufficiently large, $|R_{i,j}| \leq 1/j^{1/2 - \epsilon}$, and we also argued that $\left( 1/M_j - 1/(pj) \right) = o(1/j^{3/2 - \epsilon})$. Using this estimates, we readly get that  
	 	each of the terms on the right hand side of \eqref{eqb} is an $o(1/i^{1/2 - \epsilon})$, for $0 < \epsilon < 1/4$, and thus, so is 
	 	the left hand side of \eqref{eqb}, and we are done with the first claim of the lemma.
	 	
	 	To argue the last claim of the lemma, note that
	 	\begin{eqnarray*}
	 		\frac{1}{\sqrt{n}} \sum_{i=1}^{n} \left[ \xi_i \left(\bar{V}_{i-1} - V_L\right)^2 \right] 
	 		&=& 
	 		o\left( \frac{1}{\sqrt{n}} \sum_{i=1}^{n} \frac{\xi_i}{i^{1 - 2\epsilon }}  \right) \nonumber \\
	 		&=&
	 		o\left( \frac{1}{\sqrt{n}} \sum_{i=1}^{n} \frac{\mu}{i^{1 - 2\epsilon }}  \right) 
	 		+
	 		o\left( \frac{1}{\sqrt{n}} \sum_{i=1}^{n} \frac{\bar\xi_i }{i^{1 - 2\epsilon }}  \right) = o(1/n^{1/2 - 2\epsilon}),
	 	\end{eqnarray*}  
 	where the last equality holds by the hypothesis that $\xi_1$ has finite second moment and the Two Series Theorem, and we are done. 
	 \end{proof}     
 
 Proceeding analogously as in the proof of Proposition~\ref{Pr: TCL1}, similarly breaking down the relevant quantities, 
 we may also obtain a central limit theorem for the velocity of the t.p.~on the modified process (at collision times), namely
 \begin{prop}\label{Pr: TCL2}
 	Let $\Var(\xi_1) = \sigma^2 < \infty.$ Then, as $n\to\infty$, 
 	\begin{equation*}
 	\sqrt{n}\left( \bar{V}_n - V_L\right) \Longrightarrow \mathcal{N}(0,\hat\sigma_v^2),
 	\end{equation*}
 where $\hat\sigma_v>0$.
 \end{prop}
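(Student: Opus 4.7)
The plan is to reduce Proposition~\ref{Pr: TCL2} to a central limit theorem for $\sqrt{n}\,(\bar V_n^2 - V_L^2)$. Since $\bar V_n\to V_L$ almost surely, the identity $\sqrt n\,(\bar V_n - V_L) = \sqrt n\,(\bar V_n^2 - V_L^2)/(\bar V_n + V_L)$ and Slutsky's theorem turn the problem into showing that $\sqrt n\,(\bar V_n^2 - V_L^2)$ converges in distribution to a centered Gaussian with positive variance. The strategy is to carry out, in the simpler ``single-index'' form (i.e., with no outer summation over $i$ and no $\xi_{i+1}$ weight), the same decomposition used in the proof of Proposition~\ref{Pr: TCL1}.

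Using \eqref{eq: DefV} together with the identity $V_L^2 = 2F\mu/(p\zeta)$, and absorbing the deterministic Riemann-sum error $\frac{1}{pn}\sum_{j=1}^n (j/n)^{\zeta-1} - 1/(p\zeta) = O(1/n)$ into an $o(1)$ after multiplication by $\sqrt n$, I would write $\sqrt n\,(\bar V_n^2 - V_L^2) = A_n + B_n + C_n + o(1)$, where
\begin{equation*}
A_n = \frac{2F}{p\sqrt n}\sum_{j=1}^n \left(\frac{j}{n}\right)^{\zeta-1}\bar\xi_j,\qquad B_n = 2F\sqrt n\sum_{j=1}^n \bar\xi_j\left(X_{n,j} - \frac{j^{\zeta-1}}{pn^\zeta}\right),
\end{equation*}
\begin{equation*}
C_n = 2F\mu\sqrt n\sum_{j=1}^n \left(X_{n,j} - \frac{j^{\zeta-1}}{pn^\zeta}\right).
\end{equation*}
Lindeberg--Feller applies directly to $A_n$: since the $\bar\xi_j$'s are i.i.d.~mean zero with variance $\sigma^2$ and $\zeta\ge 2$, the normalized sum of squared coefficients tends to $(2F/p)^2\int_0^1 x^{2(\zeta-1)}dx = 4F^2/(p^2(2\zeta-1))$, yielding $A_n\Longrightarrow \mathcal N(0,4F^2\sigma^2/(p^2(2\zeta-1)))$. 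For $B_n$ I would reproduce the second-moment argument of Lemma~\ref{Le: Var1}: restricting to the event $A_{m,\varepsilon}$ of Lemma~\ref{Le: 1} and exploiting the independence of $\{\bar\xi_j\}$ from $\{\eta_k\}$, one gets $\mathbb E[B_n^2 \, 1_{A_{m,\varepsilon}}]\le C\varepsilon^2 + o(1)$, which, by the arbitrariness of $\varepsilon$, gives $B_n = o(1)$ in probability.

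The bulk of the work is the analysis of $C_n$, which I would handle by mimicking the decomposition \eqref{eq: Z}, writing $C_n = C_{1,n} + C_{2,n} + C_{3,n}$, and then expanding the remainder $R_{n,j}$ in $C_{3,n}$ as in \eqref{eq: R}. By the same arguments as in the original proof, $C_{2,n}$ is $o(1)$ via the estimate of Lemma~\ref{Le: L3}; the $R^{(1)}_{n,j}$ and $R^{(5)}_{n,j}$ contributions are $o(1)$; the $R^{(3)}_{n,j}$ contribution splits as in \eqref{eq: Z5} into a $Z_{5,n}$-analog (handled as in Lemma~\ref{Le: Var3}) and a $Z_{6,n}$-analog (handled via the $o(1/k^{2-\alpha})$ bound), both $o(1)$; the quadratic $R_{n,j}^2$ piece is $o(1)$ via the uniform bound \eqref{eq:rij}; and crucially the $R^{(2)}_{n,j}$ contribution exactly cancels $C_{1,n}$, by the same interchange of summations and the substitution of $1/(M_k+1)$ by $1/M_k$ carried out in \eqref{eq:zn1}. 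What survives is the $R^{(4)}_{n,j}$ contribution, which after interchanging the order of summation, approximating $1/(p(k-1)+3)$ by $1/(pk)$ (the difference being $O(1/k^2)$ and contributing $o(1)$), and using $\sum_{j=1}^k j^{\zeta-1}\sim k^\zeta/\zeta$, becomes
\begin{equation*}
\frac{4F\mu}{p^2\zeta\sqrt n}\sum_{k=1}^n \left(\frac{k}{n}\right)^{\zeta-1}\bar\eta_k + o(1);
\end{equation*}
a routine Lindeberg--Feller check yields the limit $\mathcal N\bigl(0,\, 16F^2\mu^2(1-p)/(p^3\zeta^2(2\zeta-1))\bigr)$.

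Putting everything together, and using the independence of $\{\bar\xi_j\}$ and $\{\bar\eta_k\}$, the two surviving Gaussian contributions combine by adding variances, so $\sqrt n\,(\bar V_n^2 - V_L^2)\Longrightarrow \mathcal N(0,\Sigma^2)$ with $\Sigma^2 = 4F^2\sigma^2/(p^2(2\zeta-1)) + 16F^2\mu^2(1-p)/(p^3\zeta^2(2\zeta-1)) > 0$; a final application of Slutsky gives Proposition~\ref{Pr: TCL2} with $\hat\sigma_v = \Sigma/(2V_L) > 0$. The main obstacle will be bookkeeping inside $C_n$: verifying carefully that the cancellation $C_{1,n} + (R^{(2)}\text{-contribution}) = o(1)$ goes through in this single-sum setting, since neither term is itself $o(1)$ (indeed $C_{1,n}$ is only $O(\sqrt{\log\log n})$ by the Law of the Iterated Logarithm), and rechecking that the Hoeffding--L\'evy and L.I.L.~estimates supporting \eqref{eq:rij} still deliver errors smaller than $1/\sqrt n$ per summand once the outer averaging over $i$ is removed.
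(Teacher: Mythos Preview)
Your proposal is correct and follows exactly the approach the paper intends: the paper merely says ``Proceeding analogously as in the proof of Proposition~\ref{Pr: TCL1}, similarly breaking down the relevant quantities,'' and your $A_n+B_n+C_n$ decomposition is precisely the single-index version of that breakdown (indeed, the same three terms already appear in the paper's proof of Lemma~\ref{Le: 2}). Your closing concern is unfounded---the pointwise bound $|R_{n,j}|\le C/j^{1/2-\delta}$ is only needed for $C_{2,n}$ and the quadratic $R^2$ piece, and a direct check shows both give $O(n^{2\delta-1/2})=o(1)$ just as in the double-sum case; for the linear $R^{(2)},R^{(3)},R^{(4)}$ contributions you correctly rely on interchange of summation rather than the pointwise bound.
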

	\section{Central Limit Theorem for the Original Process}
	
	
	In this section, we 
	prove our main results.
	
	\subsection{Proof of Theorem \ref{CLTQ}}\label{cltq}
	For each, $i \in \mathbb{N}$, let $t_i$ be the instant when the t.p.~collides for the first time with the initial $i-$th particle in the line; more precisely, $t_i$ is such that $Q(t_i) = S_i.$ It is enough to show a CLT along $(t_i)$, and for that it suffices to establish a version of Proposition \ref{Pr: TCL1} with barred quantities replaced by respective unbarred quantities, which ammounts to replacing $\bar t_n$ by $t_n$ in~(\ref{eq:clts}), namely showing that 
	$(S_n - {t}_nV_L)/\sqrt{n} \Longrightarrow \mathcal{N}(0,\hat\sigma_q^2)$.
	Theorem \ref{CLTQ} readily follows with $\sigma_q^2=\frac{V_L}\mu\hat\sigma_q^2$.
	
	We use Proposition \ref{Pr: TCL1} and a comparison between $\bar{t}_i$ and $t_i$ to conclude our proof.
	Due to Proposition \ref{Pr: TCL1}, it is enough to argue that
	\begin{equation}\label{eq: 41}
		\frac{t_n - \bar{t}_n}{\sqrt{n}} = o(1).	
	\end{equation}

	Let $s_1, s_2, \ldots$ be the instants when the t.p.~recollides with a moving elastic particle, whose velocities will be, respectively, denoted by $v_1, v_2, \ldots$. As follows from the remarks in the Introduction on the fact that the dynamics is a.s.~well defined --- see paragraph right below~\eqref{eqd} --- these sequences are well defined, and $s_1, s_2, \ldots$ has no limit points.
	We also recall that, for each $l \in \mathbb{N}$, $V(s_l)$ and $V(s_l^{+})$ denote the velocities of the t.p.~immediately before and at 
	the $l$-th recollision, respectively.
	 
	For each $j \in \mathbb{N}$ we define
	\begin{equation}\label{eq: Delta}
		\Delta(j) := \sum_{s_l \in [t_{j-1}, t_j] } \left[ V^2(s_l) - V^2(s_l^{+}) \right] ~~~ \text{ and } ~~~ \delta(j) := \sum_{s_l \in [t_{j-1}, t_j] } \left[ V(s_l) - v_l \right].
	\end{equation} 
	 As follows from what has been pointed out in the above paragraph, these sums are a.s.~well defined and consist of finitely many terms.

	Let $v: [0,\infty) \longrightarrow \mathbb{R}$ denote the function that associates the position $x$ to the velocity of the t.p.~at $x$, that is, $v(x) = V(Q^{-1}(x))$. We analogously define $\bar{v}: [0,\infty) \longrightarrow \mathbb{R}$ for the modified process. We have that
	\begin{equation*}
		t_n = \int_0^{S_n} \frac{1}{v(x)}~ dx ~~~ \text{ and } ~~~ \bar{t}_n = \int_{0}^{S_n} \frac{1}{\bar{v}(x)}~ dx.
	\end{equation*}
	In this way, \eqref{eq: 41} becomes
	\begin{equation*}
		\int_{0}^{S_n} \left( \frac{1}{v(x)} - \frac{1}{\bar{v}(x)} \right)~ dx = o(n^{1/2}),
	\end{equation*}
	and due to convergence of $v(x)$ and $\bar{v}(x)$, it is enough to argue that
	\begin{equation*}
		\int_{0}^{S_n}  \big( \bar{v}^2(x) - v^2(x) \big)~ dx = o(n^{1/2}).
	\end{equation*}
	
	Torricelli's equation, \eqref{eq:1}, \eqref{eq:2} and \eqref{eq: Delta}, give us that, for each $i \in \mathbb{N}$, at position $x \in [S_{i-1}, S_i)$,   
	\begin{equation}\label{eq: dif}
	\bar{v}^2(x) - v^2(x) = \sum_{j = 1}^{i-1} \left[ \Delta(j)  \prod_{k=j}^{i-1}\left( \frac{M_k + (\eta_k - 1)}{M_k + 1} \right)^2  \right] + \sum_{s_l \in [S_{i-1}, x)} \left( V^2(s_l) - V^2(s_l^{+})\right).
	\end{equation} 
	Therefore, we have the following upper bound
	\begin{equation}\label{eq: Up}
		\int_{0}^{S_n}  \big( \bar{v}^2(x) - v^2(x) \big)~ dx \leq \sum_{i = 1}^n \left[ \xi_i  \sum_{j = 1}^{i-1} \left( \Delta(j)  \prod_{k=j}^{i-1}\left( \frac{M_k + (\eta_k - 1)}{M_k + 1} \right)^2  \right) \right] + \sum_{i=1}^{n} \xi_i \Delta(i).
	\end{equation}
	
	Turning back to \eqref{eq:2}, we have that,
	\begin{eqnarray*}
	V(s_j) - V(s_j^+)  =
    V(s_j) - \left( \frac{M(s_j)-1}{M(s_j) + 1}V(s_j) + \frac{2}{M(s_j)+1}v_j \right) =  \frac{2(V(s_j) - v_j)}{M(s_j)+1}. 
	\end{eqnarray*}
	And therefore, again by the fact that $V(\cdot)$ is convergent, recalling \eqref{eq: Delta}, we have that
	$$\Delta(j) = O \left(\frac{\delta(j)}{M_j+1}\right);$$ 
	moreover, recalling 
	\eqref{eq: X}, we have  that
	\begin{multline}\label{eq: F}
		\sum_{i = 1}^n \left[ \xi_{i+1}  \sum_{j = 1}^{i} \left( \Delta(j)  \prod_{k=j}^{i}\left( \frac{M_k + (\eta_k - 1)}{M_k + 1} \right)^2  \right) \right] + \sum_{i=1}^{n} \xi_{i+1} \Delta(i+1) = \\
		O \left(  	\sum_{i = 1}^n \left[ \xi_{i+1} \sum_{j=1}^{i} \delta(j)X_{i,j} \right] + \sum_{i=1}^n \frac{\xi_{i+1} \delta(i+1)}{i+1}\right).
	\end{multline}
	
    By Lemma \ref{Le: 1}, 
	\begin{equation*}
	\sum_{i = 1}^n \left[ \xi_{i+1} \sum_{j=1}^{i} \delta(j)X_{i,j} \right] = \sum_{j=1}^n \left[ \delta(j) \sum_{i=j}^n \xi_{i+1}X_{i,j} \right] = O \left( \sum_{j=1}^n \left[ \delta(j)j^{\zeta - 1} \sum_{i=j}^n \frac{\xi_{i+1}}{i^{\zeta}} \right] \right).
	\end{equation*}
    Since $\mathbb{E}\xi^2 < \infty$, Borel-Cantelli lemma readily implies that for every $\epsilon > 0$, $\mathbb{P}(\xi_{n+1} > \epsilon \sqrt{n} ~~ \text{i.o.}) = 0$. Thus, 
	\begin{multline*} 
	 \sum_{j=1}^n \left[ \delta(j)j^{\zeta - 1} \sum_{i=j}^n \frac{\xi_{i+1}}{i^{\zeta}} \right] 
	 =
	 O\left(\sum_{j=1}^n \left[ \delta(j)j^{\zeta - 1} \sum_{i=j}^n \frac{\epsilon \sqrt{i}}{i^{\zeta}} \right] \right) 
	 = \\
	 \epsilon \sqrt{n} O\left( \  \sum_{j=1}^n \left[ \delta(j)j^{\zeta - 1} \sum_{i=j}^n \frac{1}{i^{\zeta}} \right] \right) 
	  =  \epsilon \sqrt{n} O\left(   \sum_{j=1}^n \delta(j) \right),		
	\end{multline*}
	and also 
	\begin{equation*}
	\sum_{i=1}^n \frac{\xi_{i+1} \delta(i+1)}{i+1} = O\left( \sum_{i=1}^n \delta(i+1) \right).
	\end{equation*}
	By Lemma~\ref{Le: finite}, we are done, 
	since $\epsilon > 0$ is arbitrary.
	\begin{lem}\label{Le: finite}
		Let $\delta(j)$ as defined in \eqref{eq: Delta}. Almost surely,
		\begin{equation}\label{eqe}
		\sum_{j=1}^{\infty} \delta(j) < \infty. 	
		\end{equation}
	\end{lem}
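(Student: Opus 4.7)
The plan is to show that almost surely only finitely many elastic particles ever experience a recollision with the t.p., and that each such particle contributes a finite amount to \eqref{eqe}. Since every term $V(s_l) - v_l$ in $\sum_j \delta(j) = \sum_l (V(s_l) - v_l)$ is nonnegative (a recollision requires $V(s_l) > v_l$), we may freely regroup the sum as $\sum_k \sum_{l \in \mathcal{L}_k} (V(s_l) - v_l)$, where $\mathcal{L}_k$ indexes the recollisions involving the $k$-th elastic particle.

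To bound the contribution per particle, fix an elastic particle $k$ with $\mathcal{L}_k \neq \emptyset$, and enumerate its recollisions by $l = 1, 2, \ldots$, with post-recollision particle velocities $v^{(k)}_l$ and $v^{(k)}_0 := v_k^{(0)} = \frac{2M(t_k)}{M(t_k)+1}V(t_k)$ being the velocity right after the first collision of particle $k$ with the t.p. From \eqref{eq:2}, $v^{(k)}_l - v^{(k)}_{l-1} = \frac{2M_l}{M_l+1}(V(s^{(k)}_l) - v^{(k)}_{l-1}) > 0$, so $\{v^{(k)}_l\}$ is strictly increasing and nonnegative; since $v^{(k)}_l \leq \frac{2M_l}{M_l+1}V(s^{(k)}_l) \leq 2\sup_{t\geq 0} V_t < \infty$ a.s., it converges to some $v^{(k)}_\infty < \infty$. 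Using $M_l \geq 2$ to invert, $V(s^{(k)}_l) - v^{(k)}_{l-1} = \frac{M_l+1}{2M_l}(v^{(k)}_l - v^{(k)}_{l-1}) \leq v^{(k)}_l - v^{(k)}_{l-1}$, so telescoping bounds the contribution of particle $k$ by $v^{(k)}_\infty - v_k^{(0)} < \infty$.

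To show only finitely many particles have nonempty $\mathcal{L}_k$, observe that Theorem~\ref{Th: 1} gives $V(t_k) \to V_L$ a.s., while the law of large numbers gives $M(t_k)/k \to p > 0$, so $M(t_k) \to \infty$ and hence $v_k^{(0)} \to 2V_L$ a.s. Since $V_t \to V_L$ also yields $\sup_{t \geq t_k} V_t \to V_L$ as $k \to \infty$, and $V_L < 2V_L$, for almost every realization there exists a random $K < \infty$ such that $v_k^{(0)} > \sup_{t \geq t_k} V_t$ for every elastic $k \geq K$. Because neutral particles do not interact with one another, particle $k$ travels forever at constant speed $v_k^{(0)}$ after its first collision; the inequality just established prevents the t.p.~from ever catching up with it, so $\mathcal{L}_k = \emptyset$ for every $k \geq K$. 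Combining with the bound from the previous paragraph and summing over the finitely many $k < K$ yields \eqref{eqe}.

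The hardest step is the almost-sure existence of $K$: although the convergence $v_k^{(0)} \to 2V_L$ and $\sup_{t \geq t_k} V_t \to V_L$ both follow cleanly from Theorem~\ref{Th: 1} and the law of large numbers, combining them also requires the auxiliary fact that $\sup_{t \geq 0} V_t < \infty$ a.s., which itself follows from the boundedness of $V_t$ on compact time intervals together with its convergence at infinity.
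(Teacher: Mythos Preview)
Your proof is correct and follows essentially the same route as the paper: first argue via Theorem~\ref{Th: 1} that all but finitely many elastic particles acquire post-collision speed close to $2V_L$ and hence never recollide, then bound the contribution of each of the finitely many recolliding particles by a telescoping sum of its successive velocities. The only cosmetic difference is in the telescoping step: the paper uses the inequality $v(u_{i+1})>V(u_i)$ (the outgoing particle speed exceeds the t.p.~speed) to get $V(u_i)-v(u_i)<v(u_{i+1})-v(u_i)$, whereas you obtain the same bound from the explicit factor $\tfrac{M_l+1}{2M_l}\le 1$; and the paper caps the telescoped sum by $V_L$ (valid when there are infinitely many recollisions), while you use the cruder but uniformly valid bound $2\sup_t V_t$.
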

	\begin{proof}
		This result is already contained more or less explicitly in~\cite{FNV}, in the argument to prove Theorem \ref{Th: 1} --- see discussion on page 803 of~\cite{FNV}. For completeness and simplicity, circularity notwithstanding, we present an argument relying on
		Theorem \ref{Th: 1} directly. 
		
		There a.s.~exists a time $T_0$ such that 
		there are no recollisions with standing particles met by the t.p.~after $T_0$. This is because at large times, the velocity of the t.p.~is close enough to $V_L$ and its mass close enough to infinity, so that new collisions with standing elastic particles will give them velocity roughly $2V_L$, and thus they will be thence unreachable by the t.p.
        This means that we have only finitely many particles that recollide with the t.p.
		
		We may also conclude by an elementary reasoning using Theorem \ref{Th: 1} that if a particle collides infinitely often with the t.p., then its velocity may never exceed $V_L$. Let $u_1, u_2, \ldots$ denote the recollision times with such a particle, and $v(u_1), v(u_2), \ldots$, its velocity at such times, respectively. As we can deduce from \eqref{eq:2}, $v(u_{i+1}) > V(u_i)$; thus,
		\begin{equation}
		\sum_{i=1}^{\infty} \left[ V(u_i) - v(u_i) \right] < \sum_{i=1}^{\infty} \left[ v(u_{i+1}) - v(u_i) \right] \leq V_L,
		\end{equation}   
		and \eqref{eqe} follows. 
	\end{proof}
	
	\subsection{Proof of Theorem \ref{CLTV}}
	By Proposition \ref{Pr: TCL2}, and the convergences of both $V_n$ and $\bar{V}_n$, and after similar considerations as at the beginning of Subsection~\ref{cltq}, we find that it is enough to prove that 
	\begin{equation}\label{eqc}
		\sqrt{n}\left(\bar{V}^2_n - V^2_n \right) = o(1)
	\end{equation} 
	(so that in the end we get that Theorem \ref{CLTV} holds with $\sigma_v^2=\frac\mu{V_L}\hat\sigma_v^2$).
	
	Recalling \eqref{eq: dif}, we have that
	\begin{equation*}
		\bar{V}^2_n - V^2_n = \bar{v}^2(S_n) - v^2(S_n) = \sum_{j=1}^{n-1} \left[ \Delta(j)  \prod_{k=j}^{i-1}\left( \frac{M_k + (\eta_k - 1)}{M_k + 1} \right)^2 \right] + \Delta(n).
	\end{equation*}
	Proceeding similarly as in the proof of Theorem \ref{CLTQ}, we find that
	\begin{equation*}
	\sqrt{n}\left(\bar{V}^2_n - V^2_n \right) = O\left( \sqrt{n} \sum_{j=1}^{n} \left[ \delta(j)\frac{j^{\zeta-1}}{n^{\zeta}} \right] + \frac{\delta(n+1)}{\sqrt{n}} \right). 
	\end{equation*}
	By Lemma \ref{Le: finite}, given $\epsilon >0$, there almost surely exists $j_0 \in \mathbb{N}$ such that $\sum_{j \geq j_0} \delta(j) \leq \epsilon/2$. Thus,
	\begin{equation*}
	\sqrt{n} \sum_{j=1}^{n} \left[ \delta(j)\frac{j^{\zeta-1}}{n^{\zeta}} \right] \leq \frac{1}{n^{\zeta - 1/2}} \sum_{j=1}^{j_0} \delta(j)j^{\zeta -1} + \sum_{j > j_0}\delta(j) \leq \epsilon, 
	\end{equation*}
	for $n$ sufficiently large.
	Lemma \ref{Le: finite} implies that $\delta(n) = o(1)$. Since $\epsilon >0$ is arbitrary,~\eqref{eqc} follows.

\end{document}